\newtheorem{thm}{Theorem}[section]
\newtheorem{lem}[thm]{Lemma}
\newtheorem{cor}[thm]{Corollary}
\newtheorem{prop}[thm]{Proposition}
\newtheorem{ex}[thm]{Example}
\newtheorem{con}[thm]{Conjecture}
\newtheorem*{prob*}{Open problem}
\theoremstyle{definition}
\newtheorem{defi}[thm]{Definition}
\theoremstyle{remark}
\newtheorem{rem}[thm]{Remark}
\newtheorem*{rem*}{Remark}
\DeclareMathOperator{\id}{id}
\newcommand{\kringel}{\mathbin{\raise1pt\hbox{$\scriptstyle\circ$}}}
\newcommand{\pkt}{\mathbin{\raise0pt\hbox{$\scriptstyle\bullet$}}}
\newcommand{\F}{\mathbb{F}}
\newcommand{\ad}{{\rm ad}}
\newcommand{\End}{{\rm End}}
\newcommand{\Der}{{\rm Der}}
\newcommand{\La}{\mathfrak{a}}
\newcommand{\Ld}{\mathfrak{d}}
\newcommand{\Lf}{\mathfrak{f}}
\newcommand{\Lg}{\mathfrak{g}}
\newcommand{\Lh}{\mathfrak{h}}
\newcommand{\Ln}{\mathfrak{n}}
\newcommand{\Lq}{\mathfrak{q}}
\newcommand{\al}{\alpha}
\newcommand{\be}{\beta}
\newcommand{\ga}{\gamma}
\newcommand{\de}{\delta}
\newcommand{\ep}{\varepsilon}
\newcommand{\ka}{\kappa}
\newcommand{\ov}{\overline}
\newcommand{\ra}{\rightarrow}
\renewcommand{\phi}{\varphi}
\begin{document}


\title[CPA-structures Structures]{Commutative post-Lie algebra structures and linear equations for 
nilpotent Lie algebras}

\author[D. Burde]{Dietrich Burde}
\author[W. Moens]{Wolfgang Alexander Moens}
\author[K. Dekimpe]{Karel Dekimpe}
\address{Fakult\"at f\"ur Mathematik\\
Universit\"at Wien\\
  Oskar-Morgenstern-Platz 1\\
  1090 Wien \\
  Austria}
\email{dietrich.burde@univie.ac.at}
\address{Fakult\"at f\"ur Mathematik\\
Universit\"at Wien\\
  Oskar-Morgenstern-Platz 1\\
  1090 Wien \\
  Austria}
\email{wolfgang.moens@univie.ac.at}
\address{Katholieke Universiteit Leuven Kulak\\
E. Sabbelaan 53 bus 7657\\
8500 Kortrijk\\
Belgium}
\email{karel.dekimpe@kuleuven-kulak.be}

\date{\today}

\subjclass[2000]{Primary 17B30, 17D25}
\keywords{Post-Lie algebra, Pre-Lie algebra, CPA structure, free-nilpotent Lie algebra}

\begin{abstract}
We show that for a given nilpotent Lie algebra $\Lg$ with $Z(\Lg)\subseteq [\Lg,\Lg]$ all
commutative post-Lie algebra structures, or CPA-structures, on $\Lg$ are complete. This means that
all left and all right multiplication operators in the algebra are nilpotent. Then we study 
CPA-structures on free-nilpotent Lie algebras $F_{g,c}$ and discover a strong relationship
to solving systems of linear equations of type $[x,u]+[y,v]=0$ for generator pairs $x,y\in F_{g,c}$.
We use results of Remeslennikov and St\"ohr concerning these equations to prove that, for certain $g$ and 
$c$, the free-nilpotent Lie algebra $F_{g,c}$ has only central CPA-structures.
 \end{abstract}

\maketitle

\section{Introduction}

Post-Lie algebras and post-Lie algebra structures arise in many areas of mathematics and physics.
They have been studied by Vallette \cite{VAL} in connection with the homology of partition posets 
and the study of Koszul operads, by Loday \cite{LOD} within the context of algebraic operad triples,
and by several authors in connection with modified Yang-Baxter equations, double Lie algebras, $R$-matrices,
isospectral flows, Lie-Butcher series and other topics \cite{ELM}.
In our study of geometric structures on Lie groups, post-Lie algebras arise as a natural common generalization 
of pre-Lie algebras \cite{HEL,KIM,SEG,BU5,BU19,BU24} and LR-algebras \cite{BU34, BU38}, in the context of nil-affine 
actions of Lie groups. We have obtained several results on the existence of post-Lie algebra structures 
in general \cite{BU33,BU41,BU44,BU51}. A particular interesting class of post-Lie algebra structures
are {\em commutative} structures, so-called {\em CPA-structures}. We have studied CPA-structures mainly for
semisimple, perfect and complete Lie algebras in \cite{BU51, BU52}. In particular we have shown that 
CPA-structures on perfect Lie algebras are trivial. This is far from true for nilpotent Lie algebras.
Here the first remarkable observation was, that all CPA-structures on non-abelian $2$-generated nilpotent 
Lie algebras are {\it complete}, i.e., that all left and right multiplication operators $L(x)$ and $R(x)$
in the algebra are nilpotent \cite{BU51}. We have conjectured that this is true for all
nilpotent Lie algebras $\Lg$ without abelian factor. Here we will give a proof now, see Theorem $\ref{3.2}$.
This is a main result of this paper, which has further consequences concerning CPA-structures on nilpotent
Lie algebras. It follows, among other things, that $\Lg\cdot Z(\Lg)=0$ for CPA-structures on nilpotent Lie algebras 
with $1$-dimensional center $Z(\Lg)$, contained in $[\Lg,\Lg]$. \\
The second topic of this paper concerns the classification of CPA-structures on the free-nilpotent 
Lie algebra $F_{g,c}$, with $g\ge 2$ generators, and of nilpotency class $c\ge 2$. Here we find a surprising connection
to the work of Remeslennikov and St\"ohr \cite{RES} on the equation $[x,u]+[y,v]=0$ over free Lie algebras,
where $x$ and $y$ are free generators. We can apply their results to prove that the free-nilpotent Lie algebra
$F_{3,c}$ admits only central CPA-structures for all $c\ge 3$. The argument also works for $F_{g,c}$ with given
$g\ge 4$ for all $c\ge 3$, provided we can show the result for $F_{g,3}$. Here we conjecture that all CPA-structures
on $F_{g,c}$ for $g\ge 3$ and $g\ge 2$ are central, see Conjecture $\ref{4.9}$. For $g=2$ this leads to the study of
the system of linear equations
\begin{align*}
[x,u]+[y,v] & = 0,\\
[x,v]+[y,w] & = 0,
\end{align*}
where $(x,y)$ is a generating pair of $F_{2,c}$. We want to find all solutions $u,v,w\in [\Lg,\Lg]$.
The conjecture is that they are all contained in the center of $F_{2,c}$.
In the last section, we study such equations for general $2$-generated nilpotent Lie algebras.
We say that such a Lie algebra {\em has property $F$}, if for every generating pair $(x,y)$ of $F_{2,c}$,
all solutions $u,v,w\in [\Lg,\Lg]$ of the above equations are central. We consider the invariant 
$z(\Lg)=\frac{\dim Z(\Lg)}{\dim \Lg}$, which gives a necessary condition for $2$-generated nilpotent Lie 
algebras $\Lg$ to have property $F$. In fact, if $Z(\Lg)<\frac{1}{3}$, then $\Lg$ cannot have property $F$, 
see Lemma $\ref{5.3}$. We conjecture that the only such Lie algebras having property $F$ are the free-nilpotent
Lie algebras $F_{2,c}$, for $c\ge 3$.

\section{Preliminaries}

Let $K$ denote a field of characteristic zero, if not said otherwise, and $\Lg$ be a finite-dimensional
Lie algebra over $K$. Denote by $\Lg^1=\Lg$, $\Lg^{i+1}=[\Lg,\Lg^i]$ for $i\ge 1$ the lower central series 
ideals of $\Lg$, and by $\Lg^{(1)}=\Lg$, $\Lg^{(i+1)}=[\Lg^{(i)},\Lg^{(i)}]$ for $i\ge 1$ the derived ideals.

\begin{defi}
A Lie algebra $\Lg$ is called a {\it stem Lie algebra}, if the center is contained in the commutator,
i.e., if $Z(\Lg)\subseteq [\Lg,\Lg]$. 
\end{defi}

An indecomposable Lie algebra is a stem Lie algebra, but the converse need not hold. \\[0.2cm]
Let us first recall the definition of post-Lie algebra structures on pairs of 
Lie algebras $(\Lg,\Ln)$ over $K$ \cite{BU41}:

\begin{defi}\label{pls}
Let $\Lg=(V, [\, ,])$ and $\Ln=(V, \{\, ,\})$ be two Lie brackets on a vector space $V$ over 
$K$. A {\it post-Lie algebra structure}, or {\em PA-structure} on the pair $(\Lg,\Ln)$ is a 
$K$-bilinear product $x\cdot y$ satisfying the identities:
\begin{align}
x\cdot y -y\cdot x & = [x,y]-\{x,y\} \label{post1}\\
[x,y]\cdot z & = x\cdot (y\cdot z) -y\cdot (x\cdot z) \label{post2}\\
x\cdot \{y,z\} & = \{x\cdot y,z\}+\{y,x\cdot z\} \label{post3}
\end{align}
for all $x,y,z \in V$.
\end{defi}

Define by  $L(x)(y)=x\cdot y$ and $R(x)(y)=y\cdot x$ the left respectively right multiplication 
operators of the algebra $A=(V,\cdot)$. By \eqref{post3}, all $L(x)$ are derivations of the Lie 
algebra $(V,\{,\})$. Moreover, by \eqref{post2}, the left multiplication
\[
L\colon \Lg\ra \Der(\Ln)\subseteq \End (V),\; x\mapsto L(x)
\]
is a linear representation of $\Lg$. A particular case of a post-Lie algebra structure arises 
if the algebra $A=(V,\cdot)$ is {\it commutative}, i.e., if $x\cdot y=y\cdot x$ is satisfied for all 
$x,y\in V$, so that we have $L(x)=R(x)$ for all $x\in V$. Then the two Lie brackets $[x,y]=\{x,y\}$ 
coincide, and we obtain a commutative algebra structure on $V$ associated with only one Lie 
algebra \cite{BU51}:

\begin{defi}\label{cpa}
A {\it commutative post-Lie algebra structure}, or {\em CPA-structure} on a Lie algebra $\Lg$ 
is a $K$-bilinear product $x\cdot y$ satisfying the identities:
\begin{align}
x\cdot y & =y\cdot x \label{com4}\\
[x,y]\cdot z & = x\cdot (y\cdot z) -y\cdot (x\cdot z)\label{com5} \\
x\cdot [y,z] & = [x\cdot y,z]+[y,x\cdot z] \label{com6}
\end{align}
for all $x,y,z \in V$. 
\end{defi}

Let $\Ld$ be a nilpotent Lie subalgebra of the derivation algebra $\Der(\Lg)$. 
For a linear functional $\al\in \Ld^{\ast}$ we define the vector subspace
\[
\Lg_{\al}=\{v \in \Lg\mid (d-\al(d)\id_{\Lg})^{\dim(\Lg)}(v)=0 \text{ for all }d\in \Ld\}.
\]
If $\Lg_{\al}$ is nonzero, then $\al\in \Ld^{\ast}$ is called a {\em weight} of $\Lg$ with respect to $\Ld$, and 
$\Lg_{\al}$ is called a {\em weight space}. The following result is well-known: 

\begin{thm}[Weight space decomposition]\label{2.2}
Let $\Lg$ be a Lie algebra over an alegbraically closed field of characteristic zero and $\Ld$ be a nilpotent 
Lie subalgebra of $\Der(\Lg)$. Then $\Lg$ decomposes as a direct vector space sum
\[
\Lg=\bigoplus_{\al\in \Ld^*}\Lg_{\al},
\]
and for all $\al,\be\in \Ld^*$ we have
\[
[\Lg_{\al},\Lg_{\be}]\subseteq \Lg_{\al+\be}.
\]
\end{thm}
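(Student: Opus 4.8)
The statement is the classical weight-space (generalized eigenspace) decomposition of a module over a nilpotent Lie algebra, applied to the adjoint-type situation where $\Ld \subseteq \Der(\Lg)$ acts on $\Lg$. My plan is to treat $\Lg$ as a $\Ld$-module via the inclusion $\Ld \hookrightarrow \End(V)$ and invoke the standard structure theory. First I would recall that since $\Ld$ is a nilpotent Lie algebra acting on the finite-dimensional vector space $V$ over the algebraically closed field $K$, Lie's theorem (in its generalized form for nilpotent Lie algebras, sometimes attributed to the "Fitting–Lie" or primary decomposition) gives a decomposition $V = \bigoplus_{\al \in \Ld^*} V_\al$, where $V_\al = \{ v \in V \mid (d - \al(d)\id)^{N} v = 0 \text{ for all } d \in \Ld\}$ for $N = \dim V$, and where $\al$ ranges over the finitely many functionals for which $V_\al \neq 0$. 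The key input here is that the commuting-up-to-lower-order structure forced by nilpotency of $\Ld$ lets one simultaneously put all operators $d \in \Ld$ into generalized-eigenspace form; this is where one uses that $\Ld$ is nilpotent (so that $\al$ is indeed \emph{linear} on $\Ld$, vanishing on $[\Ld,\Ld]$) and that $K$ is algebraically closed (so that eigenvalues exist). I would either cite a standard reference (e.g.\ Bourbaki, or Jacobson's \emph{Lie Algebras}) or sketch the induction on $\dim \Ld$ using the fact that a codimension-one ideal always exists in a nilpotent Lie algebra.

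The second, more substantive part is the relation $[\Lg_\al, \Lg_\be] \subseteq \Lg_{\al+\be}$. Here I would exploit that the operators in $\Ld$ are not arbitrary endomorphisms but \emph{derivations} of the bracket $[\,,\,]$ on $\Lg$. Fix $d \in \Ld$, $x \in \Lg_\al$, $y \in \Lg_\be$. The derivation identity $d[x,y] = [dx,y] + [x,dy]$ iterates, by a Leibniz-type formula, to
\[
(d - (\al(d)+\be(d))\id)^{m}[x,y] = \sum_{k=0}^{m} \binom{m}{k} \big[(d-\al(d)\id)^{k} x,\ (d-\be(d)\id)^{m-k} y\big].
\]
Taking $m$ large enough (e.g.\ $m \ge 2\dim\Lg$), in every summand either $k \ge \dim\Lg$, killing the first factor since $x \in \Lg_\al$, or $m-k \ge \dim\Lg$, killing the second factor since $y \in \Lg_\be$. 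Hence $[x,y] \in \Lg_{\al+\be}$ (note $\Lg_{\al+\be}$ is defined with exponent $\dim\Lg$, but a generalized eigenvector killed by some power is killed by the power $\dim\Lg$, since the relevant invariant subspace has dimension at most $\dim\Lg$). This proves the claimed inclusion and, incidentally, shows the set of weights is closed under the additive structure inherited from the decomposition of $V$.

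The main obstacle, such as it is, is purely expository: establishing the generalized Leibniz formula and checking the index bookkeeping so that the exponent $\dim\Lg$ in the \emph{definition} of $\Lg_{\al+\be}$ suffices — the derivation identity only immediately yields nilpotency of $d - (\al(d)+\be(d))\id$ on $[\Lg_\al,\Lg_\be]$ for \emph{some} exponent, and one must argue (via the Fitting decomposition, or simply because any nilpotent operator on a space of dimension $\le \dim\Lg$ satisfies the $\dim\Lg$-th power identity) that this is enough. Since Theorem 2.2 is labelled "well-known," I would keep the first part to a citation and a one-line reduction, and devote the written proof mainly to the Leibniz computation for the bracket inclusion.
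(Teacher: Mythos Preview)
The paper does not actually prove Theorem~2.2: it is stated with the preface ``The following result is well-known'' and no proof is given. Your plan is the standard argument for this classical fact and is correct; in particular, your Leibniz-formula computation for $[\Lg_\al,\Lg_\be]\subseteq\Lg_{\al+\be}$ and your remark about why the exponent $\dim\Lg$ suffices are exactly what one would write if asked to supply a proof. Since the paper itself only cites the result, there is nothing further to compare.
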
 

Note that the weight space decomposition makes sense for post-Lie algebra structures on
pairs $(\Lg,\Ln)$ as follows. Suppose that $\Lg$ is nilpotent. Then the left multiplication map
$L\colon \Lg\ra \Der(\Ln)$ is a Lie algebra homomorphism, so that we may specialize Theorem $\ref{2.2}$ 
to
\[
\Ld:=L(\Lg)\subseteq \Der(\Ln),
\]
which is a nilpotent subalgebra of derivations.

\section{Complete CPA-structures}

In accordance with complete LR-structures \cite{BU38} and complete pre-Lie algebra structures \cite{SEG} we
define complete CPA-structures as follows.

\begin{defi}
A CPA-structure on a Lie algebra $\Lg$ is called {\em complete}, if all left multiplication operators
$L(x)$, and hence all right multiplication operators $R(x)$, are nilpotent.
\end{defi}

We have shown in \cite{BU51} that all CPA-structures on a non-abelian nilpotent Lie algebra 
generated by two elements are complete. The question then was, whether this is true for all
nilpotent Lie algebras without an abelian factor. The aim of this section is to give a positive answer
to this question. We will prove it in Theorem $\ref{3.2}$. However, first
we need two lemmas.

\begin{lem}\label{3.3}
Let $L$ be a nilpotent Lie algebra, and $(A,\cdot)$ a CPA-structure on $\Lg$. Then
for all weights $\al,\be\in \Lg^{\ast}$ we have the inclusion
\[
\Lg_{\al}\cdot\Lg_{\be}\subseteq \Lg_{\al}\cap \Lg_{\be}.
\]
Furthermore, if $\al\neq \be$ we have
\[
\al (\Lg_{\be})=\be (\Lg_{\al})=0.
\]
\end{lem}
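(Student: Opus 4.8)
\noindent\emph{Plan.} The plan is to read both statements off the weight space decomposition of Theorem~\ref{2.2} applied to the left multiplication algebra. Since $\Lg$ is nilpotent and $(A,\cdot)$ is a CPA-structure, \eqref{com6} shows that each $L(x)$ lies in $\Der(\Lg)$ and \eqref{com5} shows that $L\colon\Lg\to\Der(\Lg)$ is a Lie algebra homomorphism; hence $\Ld:=L(\Lg)$ is a homomorphic image of the nilpotent Lie algebra $\Lg$, so it is a nilpotent subalgebra of $\Der(\Lg)$. Extending scalars to an algebraic closure of $K$ if necessary, Theorem~\ref{2.2} gives $\Lg=\bigoplus_{\ga}\Lg_{\ga}$, the sum over the weights $\ga$ of $\Lg$ with respect to $\Ld$, which I regard as elements of $\Lg^{\ast}$ via $L$. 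Besides the decomposition itself I will use only the standard companion fact that each weight space is $\Ld$-invariant, i.e.\ $L(x)(\Lg_{\ga})\subseteq\Lg_{\ga}$ for all $x\in\Lg$ and all weights $\ga$. The bracket relation $[\Lg_\al,\Lg_\be]\subseteq\Lg_{\al+\be}$ of Theorem~\ref{2.2} will not be needed.

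First I would prove the inclusion. Fix $x\in\Lg_\al$ and $y\in\Lg_\be$. Then $x\cdot y=L(x)(y)\in\Lg_\be$ because $L(x)$ preserves $\Lg_\be$, while by commutativity \eqref{com4} we also have $x\cdot y=y\cdot x=L(y)(x)\in\Lg_\al$ because $L(y)$ preserves $\Lg_\al$. Hence $x\cdot y\in\Lg_\al\cap\Lg_\be$, i.e.\ $\Lg_\al\cdot\Lg_\be\subseteq\Lg_\al\cap\Lg_\be$.

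Next, for the second statement, assume $\al\ne\be$. Since $\bigoplus_\ga\Lg_\ga$ is a direct sum we have $\Lg_\al\cap\Lg_\be=0$, so the inclusion just proved gives $\Lg_\al\cdot\Lg_\be=0$. Hence, for any $y\in\Lg_\be$, the operator $L(y)$ annihilates $\Lg_\al$: for every $x\in\Lg_\al$ one has $L(y)(x)=y\cdot x=x\cdot y\in\Lg_\al\cdot\Lg_\be=0$. On the other hand $(L(y)-\al(y)\,\id_{\Lg})^{\dim\Lg}$ vanishes on $\Lg_\al$ by the definition of $\Lg_\al$, so together with $L(y)|_{\Lg_\al}=0$ this forces $\al(y)^{\dim\Lg}\,\id$ to vanish on the nonzero space $\Lg_\al$, hence $\al(y)=0$. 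Since $y\in\Lg_\be$ was arbitrary, $\al(\Lg_\be)=0$; exchanging the roles of $\al$ and $\be$ gives $\be(\Lg_\al)=0$.

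The only step that is not a formal manipulation is the $\Ld$-invariance of the weight spaces, which is where I expect the "real" work to sit; but this is classical — it follows from the identity $(d-\la\,\id)^N d'=\sum_{j\ge 0}\binom{N}{j}(\ad(d)^jd')(d-\la\,\id)^{N-j}$ in $\End(V)$ combined with the nilpotency of $\ad_{\Ld}$ — so in the end the lemma is essentially bookkeeping on top of Theorem~\ref{2.2}.
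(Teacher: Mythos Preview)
Your proof is correct and follows essentially the same route as the paper: the paper, too, uses the $L(\Lg)$-invariance of the weight spaces together with commutativity \eqref{com4} to get $\Lg_{\al}\cdot\Lg_{\be}\subseteq\Lg_{\al}\cap\Lg_{\be}$, and then deduces from $L(\Lg_{\al})|_{\Lg_{\be}}=0$ that the weight value $\be(\Lg_{\al})$ must vanish (you phrase this last step via the generalized eigenvalue equation, the paper speaks of ``diagonal components'', but the content is identical). The only cosmetic difference is that the paper first concludes $\be(\Lg_{\al})=0$ and then invokes symmetry, whereas you swap the roles and derive $\al(\Lg_{\be})=0$ first.
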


\begin{proof}
By construction the weight spaces $\Lg_{\al}$ and $\Lg_{\be}$ are invariant under left multiplication, i.e., 
we have $L(x)\Lg_{\al}\subseteq \Lg_{\al}$ and $L(y)\Lg_{\be}\subseteq \Lg_{\be}$ for all $x,y\in A$.
By specializing $x\in \Lg_{\be}$ and $y\in \Lg_{\al}$ we obtain
\begin{align*}
\Lg_{\al}\cdot \Lg_{\be} & \subseteq \Lg_{\be},\\
\Lg_{\be}\cdot \Lg_{\al} & \subseteq \Lg_{\al}.
\end{align*}
Since the CPA-structure is commutative we have $\Lg_{\al}\cdot \Lg_{\be}=\Lg_{\be}\cdot \Lg_{\al}$,
which gives $\Lg_{\al}\cdot\Lg_{\be}\subseteq \Lg_{\al}\cap \Lg_{\be}$. \\
For $\al\neq \be$ we have
\[
L(\Lg_{\al})\Lg_{\be}=\Lg_{\al}\cdot \Lg_{\be}\subseteq \Lg_{\al}\cap \Lg_{\be}=0.
\]
Hence all operators in $L(\Lg_{\al})$ vanish on $\Lg_{\be}$, so that in particular the diagonal
components $\be(\Lg_{\al})$ in $L(\Lg_{\al})_{\mid \Lg_{\be}}$ vanish identically. This gives
$\be(\Lg_{\al})=0$. By symmetry we obtain $\al(\Lg_{\be})=0$.
\end{proof}

\begin{lem}\label{3.4}
Let $\Lg$ be a nilpotent Lie algebra with a CPA-structure $(A,\cdot)$. Then the weight space
decomposition induced by $L(\Lg)\subseteq \Der(\Lg)$ satisfies
\[
[\Lg,\Lg]\subseteq \Lg_0.
\]
\end{lem}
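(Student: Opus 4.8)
The plan is to use the weight space decomposition $\Lg = \bigoplus_{\al \in \Lg^*} \Lg_\al$ induced by $\Ld := L(\Lg) \subseteq \Der(\Lg)$ from Theorem~\ref{2.2}, together with the fact that $L$ is a Lie algebra homomorphism, to show that every bracket $[\Lg_\al, \Lg_\be]$ lands in $\Lg_0$. Since $[\Lg,\Lg]$ is spanned by such brackets, this suffices. The key structural input is that $\Lg$ is nilpotent, so the adjoint representation $\ad\colon \Lg \to \Der(\Lg)$ is nilpotent, and hence $\Ld = L(\Lg)$, being a homomorphic image involving this structure, acts in a way that is compatible with the grading.

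First I would recall from Lemma~\ref{3.3} that for distinct weights $\al \neq \be$ we have $\al(\Lg_\be) = \be(\Lg_\al) = 0$, and moreover $\Lg_\al \cdot \Lg_\be \subseteq \Lg_\al \cap \Lg_\be$, which is $0$ when $\al \neq \be$. Next, by Theorem~\ref{2.2} applied to $\Ld = L(\Lg)$, we have $[\Lg_\al, \Lg_\be] \subseteq \Lg_{\al+\be}$. So I need to show that whenever $[\Lg_\al, \Lg_\be] \neq 0$, the weight $\al + \be$ is actually $0 \in \Lg^*$. The idea is to evaluate the functional $\al + \be$ on generators of $\Lg$, i.e.\ to show $(\al+\be)(\Lg_\ga) = 0$ for every weight $\ga$. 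Using Lemma~\ref{3.3}: if $\ga \notin \{\al, \be\}$ then $\al(\Lg_\ga) = \be(\Lg_\ga) = 0$ directly. The remaining cases are $\ga = \al$ and $\ga = \be$, where I must show $(\al + \be)(\Lg_\al) = 0$ and $(\al+\be)(\Lg_\be) = 0$; again by Lemma~\ref{3.3} the cross terms $\be(\Lg_\al)$ and $\al(\Lg_\be)$ vanish (assuming $\al \neq \be$), so this reduces to $\al(\Lg_\al) = 0$ and $\be(\Lg_\be) = 0$. Thus the crux becomes: for a CPA-structure on a nilpotent Lie algebra, every weight $\al$ satisfies $\al(\Lg_\al) = 0$.

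To establish $\al(\Lg_\al) = 0$, I would use that $L\colon \Lg \to \Der(\Lg)$ is a homomorphism of Lie algebras and that $\Lg$ is nilpotent, so $L(\Lg)$ is a nilpotent Lie algebra of derivations; the point $\al(\Lg_\al)$ is the ``diagonal'' eigenvalue of the operators $L(x)$ for $x \in \Lg_\al$ restricted to $\Lg_\al$. For $x, y \in \Lg_\al$ with $\al \neq 0$ we have $[x,y] \in \Lg_{2\al} = 0$ (if $2\al \neq \al$, i.e.\ $\al \neq 0$, and $2\al$ not a weight — or more carefully, $[\Lg_\al,\Lg_\al] \subseteq \Lg_{2\al}$, and I'd argue $\Lg_{2\al} = 0$ using a finiteness/grading argument on the chain of weights $\al, 2\al, 3\al, \dots$, which must terminate), so $\Lg_\al$ is an abelian subalgebra; then the relation \eqref{com5} gives $[x,y]\cdot z = L(x)L(y)z - L(y)L(x)z$, and since $[x,y] = 0$ the operators $L(x), L(y)$ commute on all of $\Lg$. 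Restricting to the finite-dimensional space $\Lg_\al$, the commuting family $\{L(x)|_{\Lg_\al} : x \in \Lg_\al\}$ acts, and the trace of $L(x)|_{\Lg_\al}$ is $\dim(\Lg_\al)\cdot \al(x)$ up to the nilpotent part; I want this to force $\al|_{\Lg_\al} = 0$. The cleanest route is: consider $x \in \Lg_\al$ and apply $\tr$ to the operator identity $L([x,x]) = 0 = [L(x), L(x)]$ — this is trivial — so instead I would use \eqref{com5} with a generator argument, or invoke that since $[\Lg,\Lg]\cdot z$ expressions are commutators of left multiplications, $L([\Lg,\Lg])$ consists of operators with trace zero, combined with $L(\Lg)$ being nilpotent-acting, to deduce the weight functionals vanish on their own weight spaces.

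The main obstacle I anticipate is precisely this last step — pinning down $\al(\Lg_\al) = 0$ rigorously. The subtlety is that $\al(\Lg_\al)$ need not obviously vanish just from Lemma~\ref{3.3}, which only handles \emph{distinct} weights; one genuinely needs to exploit the CPA-axioms \eqref{com5}–\eqref{com6} together with nilpotency of $\Lg$ (so that $L(\Lg)$ acts nilpotently, forcing eigenvalues of $L(x)$ to be controlled) to conclude. I expect the authors handle this either by a trace argument on $L(\Lg_\al)$ acting on $\Lg$ — noting $L(\Lg_\al)$ is spanned by derivations whose restriction to each weight space is scalar-plus-nilpotent, and summing the constraints $\sum_\be \dim(\Lg_\be)\,\be(x) \cdot (\text{something}) = 0$ — or by directly showing $\Lg_\al \cdot \Lg_\al \subseteq \Lg_\al$ forces $L(x)|_{\Lg_\al}$ nilpotent via a descending-chain argument on $\Lg_\al \supseteq \Lg_\al\cdot\Lg_\al \supseteq \cdots$, which terminates because $\Lg$ (hence $\Lg_\al$) is finite-dimensional and the product interacts with the nilpotent bracket. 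Once $\al(\Lg_\al)=0$ is in hand, assembling the pieces to get $(\al+\be)(\Lg_\ga)=0$ for all $\ga$, hence $\al+\be = 0$ whenever $[\Lg_\al,\Lg_\be]\neq 0$, hence $[\Lg,\Lg]\subseteq \Lg_0$, is routine.
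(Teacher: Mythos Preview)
Your reduction has a genuine gap. You correctly reduce the problem to the claim $\al(\Lg_\al)=0$ for every weight $\al$, but observe what that claim actually entails: combined with Lemma~\ref{3.3} (which gives $\al(\Lg_\ga)=0$ for all $\ga\neq\al$), it forces $\al(\Lg)=0$, i.e.\ $\al=0$. So your argument, if it went through, would prove $\Lg=\Lg_0$ outright---which is Theorem~\ref{3.2}, not Lemma~\ref{3.4}. Theorem~\ref{3.2} requires the additional hypothesis $Z(\Lg)\subseteq[\Lg,\Lg]$, which Lemma~\ref{3.4} does \emph{not} assume. And indeed the claim $\al(\Lg_\al)=0$ is simply false without the stem hypothesis: take $\Lg=Kv\oplus\La$ with $[v,\La]=0$ and the CPA-structure $v\cdot v=v$, $v\cdot\La=\La\cdot\La=0$ (this is the example in the Remark following Theorem~\ref{3.2}). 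Here $\Lg_\al=Kv$ for the weight $\al$ with $\al(L(v))=1$, so $\al(\Lg_\al)\neq 0$; yet $[\Lg,\Lg]=[\La,\La]\subseteq\La=\Lg_0$, so Lemma~\ref{3.4} still holds. None of your proposed mechanisms for proving $\al(\Lg_\al)=0$---trace arguments, descending chains on $\Lg_\al\cdot\Lg_\al$, commutativity of the $L(x)$---can succeed here, since the statement is false.

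The paper's route is different and does not pass through $\al(\Lg_\al)=0$. It first proves the intermediate inclusion $\Lg\cdot[\Lg,\Lg]\subseteq\Lg_0$ by analyzing a \emph{triple} product $\Lg_\al\cdot[\Lg_\be,\Lg_\ga]$: assuming it is nonzero, Lemma~\ref{3.3} forces $\al=\be+\ga$, and then axiom~\eqref{com5} rewrites it as a sum of terms $\Lg_\be\cdot(\Lg_\ga\cdot\Lg_{\be+\ga})$ and $\Lg_\ga\cdot(\Lg_\be\cdot\Lg_{\be+\ga})$; nonvanishing of one of these, again via Lemma~\ref{3.3}, forces $\be=\ga=\be+\ga$, hence $\al=\be=\ga=0$. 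From $L(\Lg)([\Lg,\Lg])\subseteq\Lg_0$ one then iterates: $L(\Lg)^{n+1}([\Lg,\Lg])\subseteq L(\Lg)^n(\Lg_0)=0$ for $n$ large, which characterizes $[\Lg,\Lg]\subseteq\Lg_0$. The key idea you are missing is to bring in axiom~\eqref{com5} on the triple product rather than trying to kill $\al(\Lg_\al)$ directly.
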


\begin{proof}
By Lemma $\ref{3.3}$ we have $\Lg_0\cdot \Lg_0\subseteq \Lg_0$. We first show that
\[
 \Lg\cdot [\Lg,\Lg]\subseteq \Lg_0\cdot (\Lg_0\cdot \Lg_0)\subseteq \Lg_0.
\]
By linearity, it is sufficient to show this for weight spaces, i.e., to show that
\[
\Lg_{\al}\cdot [\Lg_{\be},\Lg_{\ga}]\subseteq \Lg_0\cdot (\Lg_0\cdot \Lg_0)\subseteq \Lg_0.
\]
For $\Lg_{\al}\cdot [\Lg_{\be},\Lg_{\ga}]= 0$ there is nothing to prove, so that we may assume
that 
\[
0\neq \Lg_{\al}\cdot [\Lg_{\be},\Lg_{\ga}]\subseteq \Lg_{\al}\cdot \Lg_{\be+\ga}\subseteq \Lg_{\al}\cap \Lg_{\be+\ga}
\]
by the grading property of the weight space decomposition and by Lemma $\ref{3.3}$. Hence we have 
$\al=\be+\ga$. By \eqref{com4} and \eqref{com5} we obtain
\begin{align*} 
\Lg_{\al}\cdot [\Lg_{\be},\Lg_{\ga}] & = \Lg_{\be+\ga}\cdot [\Lg_{\be},\Lg_{\ga}] \\
 & = [\Lg_{\be},\Lg_{\ga}]\cdot \Lg_{\be+\ga} \\
 & \subseteq \Lg_{\be}\cdot (\Lg_{\ga}\cdot \Lg_{\be+\ga})-\Lg_{\ga}\cdot (\Lg_{\be}\cdot \Lg_{\be+\ga}).
\end{align*}
Since the left-hand side is nonzero, one of the right-hand side terms must be nonzero as well.
Whichever it is, Lemma $\ref{3.3}$ implies that $\be=\ga=\be+\ga$, so that
\[
\al=\be=\ga=0.
\]
Hence we have shown that $\Lg\cdot [\Lg,\Lg]\subseteq \Lg_0$, or $L(\Lg)([\Lg,\Lg])\subseteq \Lg_0$.
By induction we obtain $L(\Lg)^{n+1}([\Lg,\Lg])\subseteq L(\Lg)^n(\Lg_0)$ for all $n\ge 1$. By definition of
$\Lg_0$ there is some $n$, for example $\dim(\Lg)$, such that $L(\Lg)^n(\Lg_0)=0$. Hence $[\Lg,\Lg]\subseteq \Lg_0$.
\end{proof}

The above lemma has already interesting consequences.

\begin{cor}
Let $\Lg$ be a nilpotent Lie algebra with CPA-structure $(A,\cdot)$ and left multiplication
map $L\colon \Lg\ra \Der(\Lg)$. Let $\La\subseteq \Lg^t$ be a characteristic ideal of $\Lg$ contained
in the lower central series ideal $\Lg^t$, and $r=\lceil \frac{\dim(\La)+t-1}{t} \rceil$. 
Then we have
\[
L(\La)^r(\Lg)=0.
\]
In particular, if $\Lg$ is a nilpotent stem Lie algebra, i.e., satisfying $Z(\Lg)\subseteq \Lg^2=[\Lg,\Lg]$ 
we have $r=\lceil \frac{\dim Z(\Lg)+1}{2}\rceil $ and $L(Z(\Lg))^r(\Lg)=0$. This gives 
$L(Z(\Lg))(\Lg)=\Lg\cdot Z(\Lg)=0$ for $\dim Z(\Lg)=1$.
\end{cor}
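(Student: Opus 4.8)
The plan is to deduce this corollary from Lemma~\ref{3.4} together with the grading property of the weight space decomposition (Theorem~\ref{2.2}), applied to the nilpotent subalgebra $\Ld=L(\Lg)\subseteq\Der(\Lg)$. First I would record the basic observation that, since $\La$ is a characteristic ideal, it is invariant under every derivation of $\Lg$, in particular under every $L(x)$; hence $L(\La)(\Lg)\subseteq\La$, and more is true: we want to understand how $L(\La)$ moves the weight filtration. The key point is that $\La\subseteq\Lg^t\subseteq[\Lg,\Lg]$, so by Lemma~\ref{3.4} we have $\La\subseteq\Lg_0$, i.e.\ every element of $\La$ lies in the zero weight space. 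Combining this with Lemma~\ref{3.3}, for a weight $\be\neq 0$ we get $\La\cdot\Lg_\be\subseteq\Lg_0\cdot\Lg_\be\subseteq\Lg_0\cap\Lg_\be=0$, so $L(\La)$ kills every nonzero weight space and maps $\Lg$ into $\Lg_0$; moreover $L(\La)(\Lg_0)\subseteq\Lg_0$.

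Next I would quantify the nilpotency of $L(\La)$ restricted to $\Lg_0$. On $\Lg_0$ every operator $L(a)$ with $a\in\La$ is nilpotent (that is what $\Lg_0$ means, the weight being $0$), and more importantly $L(\La)$ is a nilpotent Lie algebra acting on $\Lg_0$ by nilpotent operators, so by Engel's theorem there is a full flag of $L(\La)$-invariant subspaces. But to get the sharp exponent $r=\lceil\frac{\dim(\La)+t-1}{t}\rceil$ one must exploit the extra structure: the degree shift in the lower central series. The idea is that $L(a)$ for $a\in\La\subseteq\Lg^t$ should raise the lower-central-series degree by at least $t$. Concretely, I would try to show $L(\La)(\Lg^s)\subseteq\Lg^{s+t}$ for all $s\ge 1$; this would follow from the Leibniz-type identity \eqref{com6} which says each $L(a)$ is a derivation of the Lie bracket, combined with $L(\La)(\Lg)\subseteq\La\subseteq\Lg^t$ and an induction on $s$ using $\Lg^{s+1}=[\Lg,\Lg^s]$. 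Then iterating, $L(\La)^k(\Lg)\subseteq\Lg^{1+kt}$, which vanishes once $1+kt$ exceeds the nilpotency class. To turn this into the stated bound involving $\dim(\La)$ rather than the nilpotency class of $\Lg$, one instead tracks the descent \emph{inside} $\La$: since $L(\La)$ acts nilpotently on $\Lg_0\supseteq\La$ and preserves the degree filtration on $\La$ with jumps of size $\ge t$, while $\dim\La$ pieces of "room" are available distributed across the graded layers of $\La$, a counting argument gives that $L(\La)^r(\Lg)=0$ with $r$ as stated — the ceiling $\lceil\frac{\dim(\La)+t-1}{t}\rceil$ being exactly the number of steps of size $t$ needed to exhaust a $\dim(\La)$-dimensional space whose filtration starts in degree $t$.

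The main obstacle I anticipate is making the degree-shift claim $L(\La)(\Lg^s)\subseteq\Lg^{s+t}$ rigorous, i.e.\ checking that left multiplication by an element of $\Lg^t$ genuinely shifts lower-central-series degree by $t$ and not merely by $1$. The derivation property \eqref{com6} immediately gives $L(a)(\Lg^s)\subseteq\Lg^s$ for $a$ arbitrary; the improvement to $\Lg^{s+t}$ requires that $L(a)(\Lg)\subseteq\Lg^t$ (true since $\im L(a)\subseteq\La\subseteq\Lg^t$ when $a\in\La$) and then the inductive step $L(a)([\Lg,\Lg^s])\subseteq[L(a)\Lg,\Lg^s]+[\Lg,L(a)\Lg^s]\subseteq[\Lg^t,\Lg^s]+[\Lg,\Lg^{s+t}]\subseteq\Lg^{s+t}$. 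Once this is in hand, the passage to the sharp exponent via counting dimensions within $\La$ is a bookkeeping step: one shows $L(\La)^j(\Lg)\cap\La$ lands in $\La\cap\Lg^{t+jt}$ and that each application either strictly decreases this dimension by at least... — the cleanest formulation is that $L(\La)^j(\Lg)\subseteq\Lg^{1+jt}\cap(\text{something inside }\La\text{ for }j\ge 1)$ and $\La$ can only support $\lceil\dim(\La)/t\rceil$ nontrivial graded steps of width $t$ before collapsing, adjusted by the offset $t-1$ in the ceiling. For the ``in particular'' statement, I would just substitute $\La=Z(\Lg)$, $t=2$ (legitimate because the stem condition $Z(\Lg)\subseteq\Lg^2$ holds and $Z(\Lg)$ is characteristic), giving $r=\lceil\frac{\dim Z(\Lg)+1}{2}\rceil$; and when $\dim Z(\Lg)=1$ this is $r=1$, so $L(Z(\Lg))(\Lg)=\Lg\cdot Z(\Lg)=0$ directly.
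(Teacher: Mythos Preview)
Your plan has a genuine gap at the crucial step: obtaining the exponent $r=\lceil\frac{\dim(\La)+t-1}{t}\rceil$ rather than a bound depending on the nilpotency class of $\Lg$.

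First, the degree-shift claim $L(\La)(\Lg^s)\subseteq\Lg^{s+t}$ is not correct as stated. Using only the derivation property \eqref{com6} and the fact $L(\La)(\Lg)\subseteq\La\subseteq\Lg^t$, the induction you sketch gives
\[
L(a)(\Lg^{s+1})\subseteq[\Lg^t,\Lg^s]+[\Lg,\Lg^{s+t-1}]\subseteq\Lg^{s+t},
\]
so the shift per application is only $t-1$, and the base case $s=1$ lands in $\Lg^t$, not $\Lg^{t+1}$. Iterating yields $L(\La)^k(\Lg)\subseteq\Lg^{1+k(t-1)}$, which vanishes only for $k$ of order $c/(t-1)$ with $c$ the nilpotency class. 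Your subsequent attempt to convert this into a $\dim(\La)$ bound (``$\La$ can only support $\lceil\dim(\La)/t\rceil$ nontrivial graded steps of width $t$'') does not go through: the filtration $\La\cap\Lg^{jt}$ need not drop by at least one dimension at each step, and nothing in your argument forces strict descent at the rate required.

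The paper's argument is different and does not use \eqref{com6} at all for this step. The missing ingredient is axiom \eqref{com5}, i.e.\ that $L\colon\Lg\to\Der(\Lg)$ is a \emph{Lie algebra homomorphism}. Hence for $a\in\La\subseteq\Lg^t$ the operator $L(a)$ lies in the $t$-th lower-central term of $L(\Lg)$ and is therefore a linear combination of $t$-fold \emph{associative} products $L(x_1)\cdots L(x_t)$. Since $\La$ is characteristic it is $L(\Lg)$-invariant, and by Lemma~\ref{3.4} one has $\La\subseteq\Lg_0$, so every $L(x)$ is nilpotent on $\La$; Engel's theorem then gives $L(\Lg)^m(\La)=0$ for $m=\dim(\La)$. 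Now the computation is immediate:
\[
L(\La)^r(\Lg)=L(\La)^{r-1}(\La\cdot\Lg)=L(\La)^{r-1}(\Lg\cdot\La)\subseteq L(\Lg)^{(r-1)t}\bigl(L(\Lg)(\La)\bigr)\subseteq L(\Lg)^{(r-1)t+1}(\La),
\]
and this vanishes as soon as $(r-1)t+1\ge m$, i.e.\ precisely for $r=\lceil\frac{m+t-1}{t}\rceil$. So the sharp exponent comes from counting applications of $L(\Lg)$ (each $L(\La)$ contributing $t$ of them) against the Engel bound $\dim(\La)$ on the invariant subspace $\La$, not from tracking the lower central series of $\Lg$.
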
 

\begin{proof}
By Lemma $\ref{3.4}$ we have $L(\Lg)^m(\La)=0$ for $m=\dim (\La)$. This yields
\begin{align*}
L(\La)^{\lceil \frac{m+t-1}{t}\rceil}(\Lg) & = L(\La)^{\lceil \frac{m-1}{t}\rceil}(\La\cdot \Lg)\\
 & = L(\La)^{\lceil \frac{m-1}{t}\rceil}(\Lg\cdot \La) \\
 & \subseteq L(\Lg)^m(\La)=0.
\end{align*}
\end{proof}

\begin{ex}\label{3.6}
Let $\Lg$ be the $(2m+1)$-dimensional Heisenberg Lie algebra. Then $Z(\Lg)=[\Lg,\Lg]$ is $1$-dimensional,
and for any CPA-structure on $\Lg$ we have $\Lg\cdot [\Lg,\Lg]=\Lg\cdot Z(\Lg)=0$.
\end{ex}

Now we can prove the following main theorem.

\begin{thm}\label{3.2}
Let $\Lg$ be a nilpotent stem Lie algebra. Then every CPA-structure on $\Lg$ is complete.
\end{thm}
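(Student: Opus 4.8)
The plan is to use the weight space decomposition $\Lg = \bigoplus_{\al} \Lg_\al$ induced by $L(\Lg) \subseteq \Der(\Lg)$ and show that the only weight is $\al = 0$, since completeness of the CPA-structure is precisely the statement that $L(x)$ is nilpotent for every $x$, i.e.\ that $\Lg = \Lg_0$. So the real content is: for a nilpotent stem Lie algebra, no nonzero weight $\al$ can occur. By Lemma~\ref{3.4} we already know $[\Lg,\Lg] \subseteq \Lg_0$; combined with the stem hypothesis $Z(\Lg) \subseteq [\Lg,\Lg]$ this tells us that any potential nonzero weight space $\Lg_\al$ must lie in a vector space complement of $[\Lg,\Lg]$ and in particular must intersect the center trivially. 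After passing to the algebraic closure of $K$ (harmless, since completeness is unaffected by base field extension and the weight space machinery of Theorem~\ref{2.2} needs it), I would argue by contradiction assuming some $\al \neq 0$ with $\Lg_\al \neq 0$.

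The key computational step is to exploit the grading $[\Lg_\al, \Lg_\be] \subseteq \Lg_{\al+\be}$ together with Lemma~\ref{3.3}, which says $\al(\Lg_\be) = \be(\Lg_\al) = 0$ whenever $\al \neq \be$, and $\Lg_\al \cdot \Lg_\be \subseteq \Lg_\al \cap \Lg_\be$. First I would observe that $\Lg_0$ is a subalgebra and that each $\Lg_\al$ is a module over it; moreover since $\al$ vanishes on every $\Lg_\be$ with $\be \neq \al$ and on $[\Lg,\Lg] \subseteq \Lg_0$, the functional $\al$ is essentially determined by its values on $\Lg_\al$ itself, and $L(x)|_{\Lg_\al}$ acts as $\al(x)\cdot\id$ plus a nilpotent part for $x \in \Lg_\al$. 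The crucial point is to locate a central element: because $\Lg$ is nilpotent, the lower central series terminates, and I want to show that the "top" nonzero graded piece forces a nonzero weight vector into $Z(\Lg)$, contradicting $Z(\Lg) \subseteq \Lg_0$ and $\al \neq 0$. Concretely, pick $\al \neq 0$ with $\Lg_\al \neq 0$; using nilpotency of $\Lg$, iterate brackets to push down to $[\Lg_\al, \Lg_\al, \dots]$ landing in $\Lg_{k\al}$, and since only finitely many multiples $k\al$ can be weights, eventually we get a nonzero $v \in \Lg_{k\al}$ (for the largest such $k$) which is killed by bracketing with all of $\Lg_\al$; then one must show it is also killed by bracketing with every other $\Lg_\be$, using Lemma~\ref{3.3} and the CPA-identity \eqref{com5} to control $L(\Lg_\be)\Lg_{k\al}$, so that $v \in Z(\Lg)$. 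Since $Z(\Lg) \subseteq [\Lg,\Lg] \subseteq \Lg_0$ by the stem property and Lemma~\ref{3.4}, but $v \in \Lg_{k\al}$ with $k\al \neq 0$, this contradicts the directness of the weight space sum.

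The main obstacle I expect is exactly this last reduction: showing that a weight vector which is central with respect to $\Lg_\al$-brackets is actually central in $\Lg$, i.e.\ controlling the brackets $[\Lg_\be, \Lg_{k\al}]$ for $\be$ not a multiple of $\al$. Here one cannot argue purely inside the Lie algebra; the CPA-structure must enter. The tool is \eqref{com5}, $[x,y]\cdot z = x\cdot(y\cdot z) - y\cdot(x\cdot z)$, together with \eqref{com6} and the refined vanishing $\al(\Lg_\be) = 0$ from Lemma~\ref{3.3}: these let us show that $L(\Lg_\be)$ acts on $\Lg_{k\al}$ by a \emph{nilpotent} operator whose only eigenvalue $(k\al)(\Lg_\be)$ must vanish, forcing $L(\Lg_\be)\Lg_{k\al} \subseteq \Lg_\be \cap \Lg_{k\al} = 0$, and then feeding this back through the Jacobi-like identity \eqref{com6} ($x\cdot[y,z] = [x\cdot y, z] + [y, x\cdot z]$) to conclude the Lie bracket $[\Lg_\be, \Lg_{k\al}]$ vanishes as well. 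Once every such bracket is killed, $v$ is genuinely central and we are done. I would carry out the steps in the order: (1) base change to $\ov K$ and set up the weight decomposition; (2) record $\Lg_0 \supseteq [\Lg,\Lg] \supseteq Z(\Lg)$; (3) assume $\al \neq 0$, choose the maximal $k$ with $\Lg_{k\al} \neq 0$ and a nonzero $v$ there annihilated by $\operatorname{ad}(\Lg_\al)$; (4) use the CPA-identities plus Lemma~\ref{3.3} to show $\operatorname{ad}(v)$ kills every weight space, hence $v \in Z(\Lg)$; (5) derive the contradiction with the direct-sum decomposition and conclude $\Lg = \Lg_0$, i.e.\ every $L(x)$ is nilpotent.
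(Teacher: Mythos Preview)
Your overall setup (base change, weight decomposition, Lemma~\ref{3.4} giving $[\Lg,\Lg]\subseteq\Lg_0$, and the stem hypothesis forcing $Z(\Lg)\subseteq\Lg_0$) is correct and matches the paper. The gap is in step~(4): you cannot show that your element $v\in\Lg_{k\al}$ is central in $\Lg$. By Lemma~\ref{3.4} you do get $[\Lg_\be,\Lg_{k\al}]\subseteq\Lg_{\be+k\al}\cap\Lg_0=0$ for every $\be\neq -k\al$, so the only obstruction is $[\Lg_{-k\al},v]$. Your proposed tool, identity~\eqref{com6}, does not kill this bracket: taking $x\in\Lg_\ga$, $y\in\Lg_{-k\al}$, $z=v$ and running through the cases for $\ga$ only yields $\Lg\cdot[y,v]=0$, which says $[y,v]$ is annihilated by all left multiplications, not that $[y,v]=0$. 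There is no way to leverage the CPA-axioms to force the \emph{Lie} bracket $[\Lg_{-k\al},\Lg_{k\al}]$ to vanish, and indeed it need not vanish.

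The paper's argument sidesteps this entirely by not looking for a central element. It uses the very same observation---that $\Lg_\al\cap Z(\Lg)=0$ forces $[\Lg_\al,\Lg_\be]\neq 0$ for some $\be$, and then $[\Lg_\al,\Lg_\be]\subseteq\Lg_{\al+\be}\cap\Lg_0$ forces $\be=-\al$---but then applies Lemma~\ref{3.3} in the other direction: since $-\al\neq\al$ in characteristic zero, Lemma~\ref{3.3} gives $(-\al)(\Lg_\al)=0$, i.e.\ $\al(\Lg_\al)=0$. Combined with $\al(\Lg_\ga)=0$ for all $\ga\neq\al$ (again Lemma~\ref{3.3}), this yields $\al=0$ on all of $\Lg$, the desired contradiction. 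So the missing idea is: rather than proving $[\Lg_{-\al},\Lg_\al]=0$, use the mere \emph{existence} of the weight $-\al$ to conclude $\al(\Lg_\al)=0$.
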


\begin{proof}
We can naturally embed $\Lg$ into the nilpotent Lie algebra $\Lg_{\ov{K}}:=\Lg\otimes_K\ov{K}$, which is again a 
stem Lie algebra. Furthermore the post-Lie algebra structure on $\Lg$ can be naturally extended to
$\Lg_{\ov{K}}$. In particular, if the structure on  $\Lg_{\ov{K}}$ is complete then also the structure on
$\Lg$ is complete. So we may assume that $K$ is algebraically closed.
So we can use the weight space decomposition. Now we only need to show that $\Lg=\Lg_0$. For this we fix
a weight $\al\in \Lg^{\ast}$, and then show that it is the zero-weight. By Lemma $\ref{3.3}$, $\al$ 
vanishes on all weight spaces different from $\Lg_{\al}$:
\[
\al\bigl(\bigoplus_{\ga\neq \al}L_{\ga} \bigr)=0.
\] 
By Lemma $\ref{3.4}$ we have $[\Lg,\Lg]\subseteq \Lg_0$. Suppose that $\al\neq 0$.
Then $\Lg_{\al}$ has trivial intersection with $\Lg_0$, and therefore trivial intersection
with $Z(\Lg)\subseteq [\Lg,\Lg]\subseteq \Lg_0$. Hence there exists some weight $\beta$ such that
\[
0\neq [\Lg_{\al},\Lg_{\be}]\subseteq \Lg_{\al+\be}\cap[\Lg,\Lg]\subseteq \Lg_{\al+\be}\cap \Lg_0.
\]
This forces $\al+\be=0$. Since $K$ has characteristic zero, $-\al\neq \al$. Hence Lemma $\ref{3.3}$ implies 
that 
\[
-\al(\Lg_{\al})=\be(\Lg_{\al})=0. 
\]
This yields
\[
\al(\Lg)=\al\bigl(\bigoplus_{\ga}L_{\ga} \bigr)=\al\bigl(\bigoplus_{\ga\neq \al}L_{\ga} \bigr)\oplus \al(\Lg_{\al})=0.
\]
This is a contradiction, and we are done. 
\end{proof}

\begin{rem}
The proof can be extended to stem nilpotent Lie algebras over fields of prime characteristic $p>2$.
The only argument depending on the characteristic was that we needed $-\al\neq \al$ to hold. 
In characteristic $2$ however, the result is no longer true. Consider
the $3$-dimensional Heisenberg Lie algebra $\Lh$ with basis $(x,y,z)$ and Lie bracket $[x,y]=z$.
Then $Z(\Lh)\subseteq [\Lh,\Lh]$, but there exist post-Lie algebra structures, which are not complete:
\[
x\cdot x=x,\; x\cdot y=y\cdot x=y,
\]
and all other products of basis elements equal to zero. Then $L(x)$ is not nilpotent.
\end{rem}

\begin{rem}
For a nilpotent Lie algebra $\Lg$ we even have equivalence: $\Lg$ is a stem Lie algebra if and only if all 
CPA-structures on $\Lg$ are complete. Indeed, suppose that $Z\Lg)$ is {\em not} contained 
in $[\Lg,\Lg]$. Then there exists $v\neq 0$ in $\Lg$ and an ideal $\La$ in $\Lg$ such that
\[
\Lg=K\cdot v \oplus \La,
\]
with $[v,\La]=0$. Then 
\[
v\cdot v=v,\; v\cdot \La=\La\cdot \La=0
\]
defines a CPA-structure on $\Lg$ which is not complete.
\end{rem}

CPA-structures on stem nilpotent Lie algebras sometimes satisfy other conditions in addition
to completeness.

\begin{defi}
Let $\Lg$ be a nilpotent Lie algebra. A CPA-structure on $\Lg$ is called {\em central}, if
it satisfies
\[
\Lg\cdot \Lg\subseteq Z(\Lg).
\]
\end{defi}

\begin{lem}
Let $\Lg$ be a nilpotent stem Lie algebra. A central CPA-structure on $\Lg$ satisfies
\[
\Lg\cdot Z(\Lg)=\Lg\cdot [\Lg,\Lg]=0.
\]
\end{lem}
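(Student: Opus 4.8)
The plan is to combine the definition of \emph{central} ($\Lg\cdot\Lg\subseteq Z(\Lg)$) with the Jacobi-type identity \eqref{com6} and the already-established consequence of Theorem~\ref{3.2} (via its Corollary) that $\Lg\cdot Z(\Lg)=0$ when $\dim Z(\Lg)=1$; more precisely, for the general statement I would exploit \eqref{com6} directly together with $Z(\Lg)\subseteq[\Lg,\Lg]$. First I would show $\Lg\cdot[\Lg,\Lg]=0$. Take $x\in\Lg$ and a generic bracket $[y,z]$. By \eqref{com6} we have $x\cdot[y,z]=[x\cdot y,z]+[y,x\cdot z]$; since the structure is central, both $x\cdot y$ and $x\cdot z$ lie in $Z(\Lg)$, so the two brackets on the right vanish. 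Hence $x\cdot[y,z]=0$, and by bilinearity $\Lg\cdot[\Lg,\Lg]=0$.

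Next I would deduce $\Lg\cdot Z(\Lg)=0$ from this: since $\Lg$ is a stem Lie algebra we have $Z(\Lg)\subseteq[\Lg,\Lg]$, and therefore $\Lg\cdot Z(\Lg)\subseteq\Lg\cdot[\Lg,\Lg]=0$. This gives both equalities at once, $\Lg\cdot Z(\Lg)=\Lg\cdot[\Lg,\Lg]=0$, which is exactly the claim. I expect no real obstacle here: the whole argument is a two-line application of \eqref{com6} followed by the inclusion $Z(\Lg)\subseteq[\Lg,\Lg]$. If one wanted a slightly more self-contained route avoiding any appeal to completeness, the computation above already suffices, since centrality kills the right-hand side of \eqref{com6} regardless of nilpotency of the multiplication operators.

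One subtlety worth stating explicitly in the write-up: the identity \eqref{com6} is applied with $x$ arbitrary and the \emph{second} argument a commutator, so one should phrase it as ``for all $x\in\Lg$ and all $y,z\in\Lg$'' and then pass to $[\Lg,\Lg]=\s\{[y,z]\mid y,z\in\Lg\}$ by linearity in order to conclude $\Lg\cdot[\Lg,\Lg]=0$ rather than merely $\Lg\cdot[y,z]=0$ for decomposable elements. After that the stem hypothesis is the only input needed to fold $Z(\Lg)$ into $[\Lg,\Lg]$, and the proof is complete.
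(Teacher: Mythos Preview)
Your proposal is correct and is essentially the paper's own argument: both use the derivation identity \eqref{com6} together with centrality to get $\Lg\cdot[\Lg,\Lg]\subseteq[Z(\Lg),\Lg]+[\Lg,Z(\Lg)]=0$, and then invoke the stem hypothesis $Z(\Lg)\subseteq[\Lg,\Lg]$ to deduce $\Lg\cdot Z(\Lg)=0$. The paper's proof cites \eqref{com5}, but the displayed inclusion $[\Lg,\Lg]\cdot\Lg\subseteq[Z(\Lg),\Lg]+[\Lg,Z(\Lg)]$ is really \eqref{com6} combined with commutativity, exactly as you do; your aside about Theorem~\ref{3.2} and the $1$-dimensional-center case is unnecessary and can be omitted from the final write-up.
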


\begin{proof}
Using $Z(\Lg)\subseteq [\Lg,\Lg]$, $\Lg\cdot \Lg\subseteq Z(\Lg)$ and \eqref{com5} we have
\begin{align*}
Z(\Lg)\cdot \Lg & \subseteq [\Lg,\Lg]\cdot \Lg \\
 & \subseteq [Z(\Lg),\Lg]+[\Lg,Z(\Lg)] \\
 & = 0.
\end{align*}
By commutativity also $\Lg\cdot Z(\Lg)=\Lg\cdot [\Lg,\Lg]=0$.
\end{proof}

Denote by $F_{2,3}$ the free-nilpotent Lie algebra with $2$ generators $e_1,e_2$ and nilpotency
class $3$. It has dimension $5$, with a basis $(e_1,\ldots ,e_5)=(e_1,e_2,[e_1,e_2], [e_1,[e_1,e_2]], 
[e_2,[e_1,e_2]])$ and brackets
\[
[e_1,e_2]=e_3,\; [e_1,e_3]=e_4,\;[e_2,e_3]=e_5.
\]

\begin{ex}\label{3.11}
All CPA-structures on $F_{2,3}$ are central, given by
\begin{align*}
e_1\cdot e_1 & = \al e_4+\be e_5,\\
e_1\cdot e_2 & = \ga e_4+\de e_5,\\
e_2\cdot e_2 & = \ep e_4+\ka e_5,
\end{align*}
for $\al,\be,\ga,\de,\ep,\ka\in K$. Of course, $e_2\cdot e_1=e_1\cdot e_2$.
\end{ex}

This follows by a direct computation. Note that not all CPA-structures on $\Lh=F_{2,2}$, 
the Heisenberg Lie algebra, are central.
Indeed, for $[e_1,e_2]=e_3$ we have a CPA-product given by
\[
e_1\cdot e_1=e_2,\; e_1\cdot e_2=\al e_3,
\]
which is not central. On the other hand we know that $\Lh\cdot Z(\Lh)=\Lh\cdot [\Lh,\Lh]=0$
by Example $\ref{3.6}$. Of course there are also examples where even these properties do not
hold. Consider the free-nilpotent Lie algebra $F_{3,2}$ with $3$ generators $e_1,e_2,e_3$ of
nilpotency class $2$. We have a basis $(e_1,\ldots ,e_6)$ with Lie brackets
\[
[e_1,e_2]=e_4,\; [e_1,e_3]=e_5,\;[e_2,e_3]=e_6.
\]
We have $Z(\Lg)=[\Lg,\Lg]$.

\begin{ex}\label{3.12}
There exist CPA-structures on $F_{3,2}$ with $\Lg\cdot Z(\Lg)\neq 0$.
\end{ex}

Indeed, consider the following CPA-product
\begin{align*}
e_1\cdot e_1 & = e_2,\\
e_1\cdot e_2 & = -e_5,\\
e_1\cdot e_5 & = e_6,\\
e_2\cdot e_3 & = -2e_6.
\end{align*}

\section{CPA-structures on free-nilpotent Lie algebras}

Denote by $F_{g,c}$ the free-nilpotent Lie algebra with $g$ generators and nilpotency class $c$.
We will assume that $g,c\ge 2$, because CPA-structures on abelian Lie algebras just
correspond to commutative and associative algebra structures on the underlying vector space,
see Example $2.3$ in \cite{BU52}. If $F_g$ denotes the free Lie algebra on $g$ generators
$\{x_1,\ldots,x_g \}$, $F_{g,c}$ can be defined by $F_g/F_g^{c+1}$. 
We have $Z(F_{g,c})\subseteq [F_{g,c},F_{g,c}]$, and
\[
\dim (F_{g,c})=\sum_{m=1}^c \frac{1}{m}\sum_{d\mid m}\mu(d)g^{\frac{m}{d}}.
\]
The last summand, for $m=c$ gives the dimension of $Z(F_{g,c})$. We first note
that $F_{g,c}$ always admits all possible central solutions.

\begin{lem}
Let $X=\{x_1,\ldots ,x_g\}$ be generators of  $F_{g,c}$ and $(z_1,\ldots,z_r)$ be a basis
of $Z(F_{g,c})$. Then
\[
x_i\cdot x_j=x_j\cdot x_i=\sum_{k=1}^r \al_{ij}^kz_k
\]
for $1\le i,j\le g$ and arbitary scalars $ \al_{ij}^k$ defines a central CPA-structure on $F_{g,c}$.
\end{lem}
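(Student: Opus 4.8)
The plan is to verify directly that the bilinear product $x \cdot y$ defined on generators by $x_i \cdot x_j = \sum_{k=1}^r \al_{ij}^k z_k$, and extended bilinearly to all of $F_{g,c}$ by declaring all products involving a bracket factor to be zero, satisfies the three CPA-axioms \eqref{com4}, \eqref{com5}, \eqref{com6}. First I would make the extension precise: since $F_{g,c} = K X \oplus [F_{g,c},F_{g,c}]$ as a vector space, any bilinear map is determined by its values on pairs of elements from $X \cup \{$basis of $[F_{g,c},F_{g,c}]\}$; I set $u \cdot v = 0$ whenever $u$ or $v$ lies in $[F_{g,c},F_{g,c}]$, and use the prescribed formula on $X \times X$. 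Commutativity \eqref{com4} is then immediate from the symmetry $\al_{ij}^k = \al_{ji}^k$ built into the statement.

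Next I would check \eqref{com5}, namely $[x,y]\cdot z = x\cdot(y\cdot z) - y\cdot(x\cdot z)$. The left-hand side is zero for all $x,y,z$, because $[x,y] \in [F_{g,c},F_{g,c}]$, so the product $[x,y]\cdot z$ vanishes by construction. For the right-hand side, note that $y\cdot z$ always lands in $Z(F_{g,c}) \subseteq [F_{g,c},F_{g,c}]$, hence $x\cdot(y\cdot z) = 0$; similarly $y\cdot(x\cdot z) = 0$. So both sides are zero and \eqref{com5} holds. The same observation handles \eqref{com6}: $x\cdot[y,z] = 0$ since $[y,z] \in [F_{g,c},F_{g,c}]$, while on the other side $x\cdot y \in Z(F_{g,c})$ gives $[x\cdot y, z] = 0$ and $x\cdot z \in Z(F_{g,c})$ gives $[y, x\cdot z] = 0$, so again both sides vanish.

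Strictly speaking one should confirm that the identities, which I have checked on the spanning pairs, then hold for arbitrary $x,y,z$; this is the only point requiring a word of care, and it follows because both sides of each identity are trilinear in $(x,y,z)$, so vanishing on a spanning set of each argument forces vanishing everywhere. I would also remark that the image $\Lg\cdot\Lg \subseteq Z(F_{g,c})$ by construction, so this CPA-structure is indeed \emph{central} in the sense of the definition above, which is why the computation collapses so cleanly: every nontrivial product is central, and central elements annihilate everything both on the left (via \eqref{com5}, since they lie in the commutator which multiplies to zero) and under the adjoint action. No step here is a genuine obstacle — the content of the lemma is simply that the obvious guess works, and the verification is the routine trilinearity argument sketched above.
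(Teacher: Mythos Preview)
Your argument is correct and is essentially the same as the paper's: both hinge on the observations that $\Lg\cdot[\Lg,\Lg]=0$ by construction and $\Lg\cdot\Lg\subseteq Z(\Lg)\subseteq[\Lg,\Lg]$, from which all three axioms collapse to $0=0$. The paper compresses this into the single chain $\Lg\cdot(\Lg\cdot\Lg)\subseteq\Lg\cdot Z(\Lg)\subseteq\Lg\cdot[\Lg,\Lg]=0$, whereas you spell out each axiom separately and add the trilinearity remark, but the content is identical.
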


\begin{proof}
Let $\Lg=F_{g,c}$. Because only products between generators from $X$ may have nonzero product, and 
both $Z(\Lg)$ and $[\Lg,\Lg]$ are not contained in $X$, we have
\[
\Lg\cdot (\Lg\cdot \Lg)\subseteq \Lg\cdot Z(\Lg)\subseteq \Lg\cdot [\Lg,\Lg]=0.
\]
Hence all axioms \eqref{com4}, \eqref{com5} and \eqref{com6} are satisfied.
\end{proof}

Our aim is to show that $F_{g,c}$ admits {\em only} these canonical central CPA-structures, for 
$c$ big enough. For $c=2$ we already know that this is not true, e.g., for $F_{2,2}$ and $F_{3,2}$, 
see Example $\ref{3.12}$. Hence it seems reasonable to assume that $c\ge 3$. Example $\ref{3.11}$ shows 
that $F_{2,3}$ only admits central CPA-structures. We will show that the same is true for $F_{3,3}$.
Let $X=\{x_1,x_2,x_3\}$. Then by constructing a Hall basis we find a basis $(x_1,\ldots ,x_{14})$ 
for $F_{3,3}$ with Lie brackets

\begin{align*}
[x_1,x_2] & = x_4,\, [x_1,x_3] = x_5,\, [x_1,x_4] = x_7, \, [x_1,x_5] = x_8,\\
[x_1,x_6] & = x_9,\, [x_2,x_3] = x_6,\, [x_2,x_4] = x_{10}, \, [x_2,x_5] = x_{11},  \\
[x_2,x_6] & = x_{12},\, [x_3,x_4] = x_{11}-x_9,\, [x_3,x_5] = x_{13}, \, [x_3,x_6] = x_{14}.
\end{align*}

\begin{prop}\label{4.2}
All CPA-structures on $F_{3,3}$ are central.
\end{prop}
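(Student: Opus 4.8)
The plan is to use the completeness of every CPA-structure on $F_{3,3}$, which is guaranteed by Theorem~\ref{3.2} since $F_{3,3}$ is a nilpotent stem Lie algebra, and then combine this with the defining CPA-identities to pin down the product on generators. First I would observe that, by Lemma~\ref{3.4}, we have $[\Lg,\Lg]\subseteq \Lg_0$ for the weight-space decomposition induced by $L(\Lg)$, and by Theorem~\ref{3.2} in fact $\Lg=\Lg_0$, so all $L(x)$ are nilpotent derivations of $F_{3,3}$. In particular $L(x)$ maps each lower central series term $\Lg^i$ into itself and strictly raises filtration degree in a suitable graded sense; more concretely, since $L(x)$ is a nilpotent derivation, $L(x)(\Lg)\subseteq \Lg^2$ is automatic only if we know $L(\Lg)(\Lg)$ avoids the generator-layer, which is exactly what must be shown. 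So the first real task is to prove $\Lg\cdot\Lg\subseteq [\Lg,\Lg]=\Lg^2$.

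For that step I would argue as follows. Write $\Lg=V_1\oplus \Lg^2$ where $V_1=\s(x_1,x_2,x_3)$ is a complement to $\Lg^2$. A nilpotent derivation $d$ of $\Lg$ induces a nilpotent derivation $\bar d$ of the abelianization $\Lg/\Lg^2$, but every derivation of an abelian Lie algebra is arbitrary, so nilpotency of $\bar d$ is a genuine constraint: it says the $V_1\to V_1$ block of $L(x)$ is a nilpotent $3\times 3$ matrix for every $x$. Since $L\colon \Lg\to\Der(\Lg)$ is a Lie algebra homomorphism and $\Lg$ is generated by $x_1,x_2,x_3$, the image $L(\Lg)$ is spanned as a Lie algebra by $L(x_1),L(x_2),L(x_3)$; the induced maps $\bar L(x_i)$ on the $3$-dimensional space $V_1$ generate a Lie subalgebra of $\mathfrak{gl}_3$ consisting of nilpotent endomorphisms on the nose (not just in the Lie algebra they generate — here one must be a little careful, but Engel's theorem applies once one checks the relevant subalgebra is the image of a nilpotent Lie algebra, which it is). Hence the $\bar L(x_i)$ are simultaneously strictly upper triangular, and in particular $\bar L(\Lg)(V_1)$ is a proper subspace of $V_1$. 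Using identity \eqref{com5} together with the fact that $\Lg$ is \emph{free}-nilpotent, I would then push this proper-subspace conclusion down: any CPA-product on a free-nilpotent Lie algebra that sends two generators into a proper subspace of $V_1$ modulo $\Lg^2$ must actually send all of $\Lg\cdot\Lg$ into $\Lg^2$, because the relations among the $x_i$ are only those forced by the Jacobi identity, and a nonzero $V_1$-component would survive in $\Lg/\Lg^2$ and contradict strict triangularity of all $\bar L(x_i)$ simultaneously.

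Once $\Lg\cdot\Lg\subseteq\Lg^2$ is established, I would iterate: $\Lg\cdot\Lg^2\subseteq \Lg\cdot\Lg^2$, and using \eqref{com5} and \eqref{com6} with nilpotency of the $L(x)$ one gets $\Lg\cdot\Lg^i\subseteq\Lg^{i+1}$ for all $i$ — this is where the grading property of the weight decomposition (Theorem~\ref{2.2}, with $\Lg=\Lg_0$) and Lemma~\ref{3.4} do the bookkeeping. For $c=3$ this already gives $\Lg\cdot\Lg^2\subseteq\Lg^4=0$ and $\Lg\cdot\Lg\subseteq\Lg^2$, so the product is determined by the nine values $x_i\cdot x_j\in\Lg^2$ for $1\le i\le j\le 3$. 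The final task is to show each $x_i\cdot x_j$ in fact lies in $\Lg^3=Z(F_{3,3})=\s(x_7,\ldots,x_{14})$, i.e.\ has no $x_4,x_5,x_6$ component. Here I would write $x_i\cdot x_j = \sum_{k=4}^6 c^k_{ij}x_k + (\text{central})$ and feed the twelve triples of basis brackets from the Hall basis displayed above into \eqref{com5} (for instance $[x_1,x_2]\cdot x_k = x_1\cdot(x_2\cdot x_k)-x_2\cdot(x_1\cdot x_k)$) and \eqref{com6}; since $x_4\cdot\Lg,x_5\cdot\Lg,x_6\cdot\Lg$ are already known to be central and $\Lg\cdot Z(\Lg)=0$ by the Corollary (as $F_{3,3}$ is a stem algebra with $\Lg^3$ central — though here $Z$ is not $1$-dimensional, so one uses the general $L(\La)^r(\Lg)=0$ statement plus \eqref{com5}), the equations collapse to a linear system in the $c^k_{ij}$ whose only solution is $c^k_{ij}=0$.

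\textbf{Main obstacle.} The genuinely delicate point is the passage from "the $V_1$-blocks $\bar L(x_i)$ are simultaneously strictly triangular" to "$\Lg\cdot\Lg\subseteq\Lg^2$": one has to use freeness of $F_{3,3}$ to rule out that a clever choice of products into $V_1$ still satisfies \eqref{com5}, and this requires tracking the action on $\Lg^2$ and $\Lg^3$ carefully rather than only on $\Lg/\Lg^2$. I expect the cleanest route is to show directly that if some $x_i\cdot x_j\notin\Lg^2$ then, using \eqref{com5} applied to $[x_i,x_j]$ and the already-known nilpotency of $L$, one produces an $L(x)$ with a nonzero eigenvalue on $\Lg/\Lg^2$, contradicting completeness (Theorem~\ref{3.2}); the rest is then a finite, if tedious, linear-algebra computation in the $14$-dimensional algebra.
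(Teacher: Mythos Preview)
Your outline has the right ingredients, but there is a genuine gap precisely where you flag it, and two further slips downstream.

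The critical step $\Lg\cdot\Lg\subseteq\Lg^2$ does not follow from the argument you sketch. Passing to $\Lg/\Lg^2$, completeness gives three $3\times 3$ matrices $A_i=\ov{L}(x_i)$ with every linear combination nilpotent and with the symmetry $A_i(\ov{x}_j)=A_j(\ov{x}_i)$. But this does \emph{not} force $A_i=0$: take $A_1=E_{21}+E_{32}$, $A_2=E_{31}$, $A_3=0$ (i.e.\ $x_1\cdot x_1\equiv x_2$, $x_1\cdot x_2\equiv x_3$ modulo $\Lg^2$). Every combination is strictly lower triangular, the symmetry constraint holds, and $[A_1,A_2]=0$, so your proposed shortcut ``produce an $L(x)$ with a nonzero eigenvalue via \eqref{com5}'' cannot fire. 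Ruling out such configurations genuinely requires the full CPA axioms on $F_{3,3}$, not just the abelianised picture, and this is already a computation of the same nature as the one you postpone to the end.

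Two smaller points. First, from $\Lg\cdot\Lg\subseteq\Lg^2$ and \eqref{com6} you only get $\Lg\cdot\Lg^2\subseteq\Lg^3$, not $\Lg^4$; what vanishes is $\Lg\cdot\Lg^3\subseteq\Lg^4=0$. (The product is still determined by the six --- not nine --- values $x_i\cdot x_j$ for $i\le j$, since the derivation property fixes $x_i\cdot x_l$ for $l\ge 4$.) Second, the final system in the unknowns $c^k_{ij}$ is \emph{quadratic}, not linear: in \eqref{com5} one has $x_i\cdot(x_j\cdot x_k)=\sum_l c^l_{jk}\,(x_i\cdot x_l)$, and each $x_i\cdot x_l$ is itself linear in the $c$'s via the derivation identity.

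For comparison, the paper does not attempt the conceptual reduction you propose. It writes $L(x_k)=(x^k_{ij})$ in coordinates, uses that every $L(x)$ is a nilpotent derivation of $F_{3,3}$ (Theorem~\ref{3.2} and the known derivation algebra), solves the linear constraints coming from \eqref{com4} and \eqref{com6} to eliminate most variables, and then settles the remaining quadratic constraints from \eqref{com5} by a Gr\"obner basis computation. Your route, even if the gap above is filled, would terminate in essentially the same calculation.
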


\begin{proof}
For $k=1,\ldots ,14$ let $L(x_k)=(x_{ij}^k)$ be the left multiplication operators with variables $x_{ij}^k$. 
They are determined by $L(x_1)$ and $L(x_2)$. We know that
all $L(x)$ are nilpotent by Theorem $\ref{3.2}$. Furthermore all $L(x)$ are derivations of
of $F_{3,3}$, and these are known. Now we can solve the equations in $x_{ij}^k$, given by the axioms
by a direct computation. In fact, axioms \eqref{com4} and \eqref{com6} give linear equations, 
which are easy to solve. Then it remains to solve the quadratic equations given by
\eqref{com5}. Since many coefficients are already zero, we only have relatively few equations, so that
a computation with Gr\"obner bases yields the result.
\end{proof}

The idea now is to use induction on the nilpotency class $c$, and to show the following result:

\begin{thm}\label{4.3}
All CPA-structures on $F_{3,c}$ with $c\ge 3$ are central. 
\end{thm}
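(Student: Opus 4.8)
The plan is to set up an induction on the nilpotency class $c$, with the base case $c=3$ already supplied by Proposition~\ref{4.2}. So assume $c\ge 4$ and that the result holds for $F_{3,c-1}$. Let $\Lg=F_{3,c}$ with generators $X=\{x_1,x_2,x_3\}$, and let $(A,\cdot)$ be a CPA-structure on $\Lg$. The key structural observation is that the last term of the lower central series, $\Lg^c=Z(\Lg)$, is a characteristic ideal, so the quotient $\Lg/\Lg^c$ is canonically isomorphic to $F_{3,c-1}$; moreover $\Lg\cdot \Lg^c\subseteq \Lg$ and, by Lemma~\ref{3.4} combined with the Corollary after it (applied with $\La=\Lg^c$, $t=c$), the product interacts very tamely with $\Lg^c$. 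First I would check that the CPA-structure on $\Lg$ descends to a CPA-structure on $\Lg/\Lg^c$: one needs $\Lg\cdot \Lg^c\subseteq \Lg^c$, which follows since $\Lg^c\subseteq[\Lg,\Lg]\subseteq\Lg_0$ by Lemma~\ref{3.4}, so left multiplications preserve $\Lg_0\supseteq\Lg^c$ and in fact $\Lg\cdot\Lg^c\subseteq \Lg_0\cap(\text{something small})$; more carefully, $\Lg\cdot\Lg^{c}\subseteq \Lg\cdot[\Lg,\Lg^{c-1}]$ and axiom~\eqref{com6} pushes this into $[\Lg\cdot\Lg,\Lg^{c-1}]+[\Lg,\Lg\cdot\Lg^{c-1}]\subseteq \Lg^c$ after unwinding, so the ideal $\Lg^c$ is indeed a CPA-ideal. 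By the induction hypothesis the induced structure on $\Lg/\Lg^c\cong F_{3,c-1}$ is central, i.e. $\Lg\cdot\Lg\subseteq Z(F_{3,c-1})+\Lg^c = \Lg^{c-1}$ (using that the center of $F_{3,c-1}$ is the top lower-central-series layer $\Lg^{c-1}/\Lg^c$).

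The next step is to promote $\Lg\cdot\Lg\subseteq \Lg^{c-1}$ to $\Lg\cdot\Lg\subseteq \Lg^c=Z(\Lg)$. Here I would exploit the generator structure. Since $\Lg$ is generated by $x_1,x_2,x_3$, every left multiplication operator $L(x)$ is a derivation (by \eqref{com6}), hence determined by its values $L(x)x_i = x\cdot x_i$ on the generators, and by commutativity the whole product is determined by the six elements $x_i\cdot x_j\in\Lg^{c-1}$ for $1\le i\le j\le 3$. Write $x_i\cdot x_j = u_{ij}\in\Lg^{c-1}$. Feeding \eqref{com5} with $z=x_k$ and using that $[x_i,x_j]$ ranges over a spanning set of $\Lg^2$ modulo higher terms, I would extract the linear equations governing the $u_{ij}$. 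Concretely, applying \eqref{com5} to the triple $(x_i,x_j,x_k)$ gives $[x_i,x_j]\cdot x_k = x_i\cdot(x_j\cdot x_k)-x_j\cdot(x_i\cdot x_k) = x_i\cdot u_{jk}-x_j\cdot u_{ik}$; but $x_i\cdot u_{jk}\in \Lg\cdot\Lg^{c-1}$, which by Lemma~\ref{3.4} (or directly, since $u_{jk}\in\Lg^{c-1}\subseteq[\Lg,\Lg]\subseteq\Lg_0$ and iterating the argument of the Corollary) lies in $\Lg^c$-level or vanishes — this is where one sees that the double products collapse. The point is that $L([x_i,x_j])$ acts on the generators as $x_k\mapsto x_i\cdot u_{jk}-x_j\cdot u_{ik}\in\Lg^c$, while simultaneously $L(\Lg^2)$ must be compatible with the derivation property and with the known derivations of $F_{3,c}$; comparing these forces the residual freedom in the $u_{ij}$ to be exactly the central part.

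To make the collapse rigorous I would isolate the one genuinely new equation beyond the $c-1$ case. Modulo $\Lg^c$ everything is already central by induction, so the only thing to rule out is a nonzero component of $L(\Lg)$ landing in $\Lg^c$ in a way that violates \eqref{com5}. Since $\Lg^c$ is central, any derivation $D$ of $\Lg$ with $D(\Lg)\subseteq\Lg^c$ is automatically a derivation (the bracket relations are vacuous on $\Lg^c$), so derivation-compatibility alone gives no obstruction; the obstruction must come from the associativity-type identity \eqref{com5} applied to triples where two entries are generators. Writing $u_{jk}=\sum_\ell c_{jk}^\ell m_\ell$ in a basis $\{m_\ell\}$ of $\Lg^{c-1}$ and using the induction hypothesis to say $x_i\cdot m_\ell\in\Lg^c$, the identity $[x_i,x_j]\cdot x_k = x_i\cdot u_{jk}-x_j\cdot u_{ik}$ becomes a system of linear equations with coefficients read off from the Hall-basis brackets of $F_{3,c}$, whose only solutions have all $x_i\cdot u_{jk}=0$ — equivalently $\Lg\cdot\Lg^{c-1}=0$ — forcing $\Lg\cdot\Lg\subseteq Z(\Lg)$ as desired. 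I expect the main obstacle to be precisely verifying this last linear system genuinely has only the trivial solution for all $c\ge 4$ uniformly: the Hall basis of $F_{3,c}$ grows with $c$, so one needs a structural argument (e.g. that $F_{3,c-1}$-freeness makes the relevant maps $\Lg^2/\Lg^3\otimes(\Lg^{c-1}/\Lg^c)\to\Lg^c$ injective, or a dimension count via the given formula for $\dim F_{3,c}$) rather than a finite computation. An alternative, possibly cleaner route for this step is to invoke the connection advertised in the introduction to the Remeslennikov–St\"ohr equations $[x,u]+[y,v]=0$: the vanishing of $\Lg\cdot\Lg^{c-1}$ is equivalent to showing those linear equations over $F_{3,c}$ have only central solutions for generator pairs, which the paper plans to establish via~\cite{RES}, and I would route the induction through that equivalence if the direct linear-algebra argument proves unwieldy.
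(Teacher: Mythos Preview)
Your overall strategy matches the paper: induction on $c$, base case $c=3$ via Proposition~\ref{4.2}, pass to the quotient $\Lg/\Lg^c\cong F_{3,c-1}$ to obtain $\Lg\cdot\Lg\subseteq\Lg^{c-1}$, then push this down to $\Lg^c=Z(\Lg)$. However, the extraction of the linear system in the last step is where your argument goes wrong, and it is not a detail.

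You apply axiom~\eqref{com5} to triples of generators to obtain $[x_i,x_j]\cdot x_k = x_i\cdot u_{jk}-x_j\cdot u_{ik}$. This is correct, but it is the \emph{wrong} identity: once one knows $\Lg\cdot\Lg\subseteq\Lg^{c-1}$ and $c\ge 4$, the right-hand side lies in $\Lg\cdot\Lg^{c-1}\subseteq\Lg\cdot\Lg^3$, and the paper shows directly (via two applications of~\eqref{com6} and $\Lg\cdot\Lg\subseteq\Lg^{c-1}$) that $\Lg\cdot\Lg^3\subseteq\Lg^{c+1}=0$. So both sides of your equation vanish and you learn nothing. Your attempt to set up a nontrivial linear system in the double products $x_i\cdot u_{jk}$, and then argue that it has only the trivial solution ``for all $c\ge 4$ uniformly'', is chasing a system that is already identically $0=0$.

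The identity that actually does the work is~\eqref{com6}, not~\eqref{com5}. The paper first proves the intermediate fact $\Lg\cdot[\Lg,\Lg]=0$ (using $\Lg\cdot\Lg^3=0$ together with~\eqref{com5} to get $[\Lg,\Lg]\cdot\Lg\subseteq\Lg\cdot(\Lg\cdot\Lg)\subseteq\Lg\cdot\Lg^{c-1}\subseteq\Lg\cdot\Lg^3=0$). With this in hand,~\eqref{com6} applied to generators gives
\[
0 = x_i\cdot[x_j,x_k] = [x_i\cdot x_j,\,x_k]+[x_j,\,x_i\cdot x_k] = [u_{ij},x_k]+[x_j,u_{ik}],
\]
which is precisely the Remeslennikov--St\"ohr system of Corollary~\ref{4.6}. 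That corollary (crucially requiring $g\ge 3$) then forces every $u_{ij}$ into $Z(\Lg)=\Lg^c$, finishing the induction. You mention this route only as an ``alternative'' at the very end and describe it inaccurately (it is not equivalent to the vanishing of $\Lg\cdot\Lg^{c-1}$; it is a direct constraint on the $u_{ij}$ themselves coming from~\eqref{com6}). In fact it is the only route here, and it goes through cleanly without any case-by-case Hall-basis linear algebra.
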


However, for the induction step to work, we need to solve equations
\[
[x,u]+[y,v]=0
\]
for generators $x,y$ and elements $u,v$ in $F_{g,c}$. As it turns out, there
are results in the literature concerning such equations in free Lie algebras.
The following theorem is due to Remeslennikov and St\"ohr \cite{RES}:

\begin{thm}\label{4.4}
Let $F$ be the free Lie algebra on the set $X=\{x_1,x_2,\ldots \}$. If $u,v\in F$ satisfy the linear equation
\[
[x_1,u]+[x_2,v]=0,
\]
then $u$ and $v$ are inner solutions, i.e., they are contained in the free subalgebra of $F$ generated
by $x_1$ and $x_2$.
\end{thm}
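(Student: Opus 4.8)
The plan is to prove Theorem \ref{4.4} by exploiting the natural grading of the free Lie algebra $F$ and reducing to the homogeneous multilinear case, where one can argue combinatorially with bases. First I would decompose $u=\sum_n u_n$ and $v=\sum_n v_n$ into homogeneous components; since the map $[x_1,-]+[x_2,-]$ raises degree by one and preserves the grading, the equation $[x_1,u]+[x_2,v]=0$ splits into equations $[x_1,u_n]+[x_2,v_n]=0$ for each $n$. So it suffices to treat $u,v$ homogeneous of some degree $n$. Next I would decompose further according to multidegree in the variables $x_1,x_2,x_3,\ldots$: the left multiplication by $x_1$ shifts the $x_1$-multidegree, and similarly for $x_2$, so the equation separates into pieces supported on a fixed multidegree. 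A short analysis shows that any piece involving some $x_j$ with $j\ge 3$ must vanish (if $u_n$ contains a monomial using $x_3$, then so does $[x_1,u_n]$, and this cannot be cancelled by anything in $[x_2,v_n]$ unless the corresponding part of $v_n$ also uses $x_3$, and then one recurses on the total number of "foreign" letters); hence $u_n$ and $v_n$ can be assumed to lie in the subalgebra $\langle x_1,x_2\rangle$, which is exactly the claimed conclusion.

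The genuine content is therefore the multilinear/homogeneous case inside the free Lie algebra $F_2=\langle x_1,x_2\rangle$ on two generators, or rather the slightly more delicate statement that solutions cannot escape $F_2$; but the argument above already confines us there, so what remains is to confirm there exist no obstructions, which is automatic once the "foreign letter" reduction is done carefully. I would carry out that reduction by induction on the number of occurrences of generators other than $x_1,x_2$ in the monomials appearing in $u$ and $v$. In the base case that number is zero and we are done. For the inductive step, pick a generator $x_j$, $j\ge 3$, occurring in $u$ or $v$; project $F$ onto the span of Lyndon (Hall) basis elements whose multidegree in $x_j$ is maximal among those occurring, and observe that $[x_1,-]$ and $[x_2,-]$ both preserve this $x_j$-multidegree, so the projected equation $[x_1,\bar u]+[x_2,\bar v]=0$ holds with $\bar u,\bar v$ nonzero. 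Then one needs the key injectivity-type fact: if $[x_1,\bar u]+[x_2,\bar v]=0$ with $\bar u,\bar v$ homogeneous, then $\bar u\in F x_2$-direction and $\bar v\in Fx_1$-direction in a suitable Hall-basis sense — more precisely, one can write $\bar u = [x_2,w]$ and $\bar v=-[x_1,w]$ for some $w$ (this is the "inner solution" structure). Using the right-normed or Hall basis, leading-term analysis in the highest $x_j$-part forces $w$ itself to involve $x_j$, and feeding $\bar u=[x_2,w]$, $\bar v=-[x_1,w]$ back into the original equation and subtracting yields an equation of the same shape for $w$ but with strictly fewer total foreign letters, completing the induction.

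The main obstacle, as the outline indicates, is proving the structural claim that a homogeneous solution of $[x_1,u]+[x_2,v]=0$ must have the form $u=[x_2,w]$, $v=-[x_1,w]$ — equivalently, computing the kernel of the map $(u,v)\mapsto [x_1,u]+[x_2,v]$ on a fixed homogeneous component of $F$. This is where the free Lie algebra structure must be used seriously: one natural route is via the universal enveloping algebra $U(F)$, which is the free associative algebra $K\langle X\rangle$, where $[x_i,-]$ becomes left multiplication by $x_i$ modulo lower-order Lie corrections, and one can invoke freeness (no nontrivial relations $x_1 a + x_2 b=0$ with $a,b$ in the free associative algebra unless $a\in x_2 K\langle X\rangle$, $b\in x_1 K\langle X\rangle$ — a one-line consequence of unique factorization of words by their first letter) together with the Dynkin--Specht--Wever idempotent to descend back to $F$. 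I expect the descent step — transferring the clean associative-algebra kernel computation back to the Lie algebra while keeping track of the grading — to require the most care, and I would handle it by working in the free associative algebra throughout and only at the end applying the projection $\pi$ onto Lie elements, using that $\pi$ is the identity on $F$ and commutes with the grading. Alternatively, one may cite the explicit description of such kernels in terms of Hall bases; but since Theorem \ref{4.4} is attributed to Remeslennikov and St\"ohr, I would ultimately defer the full combinatorial verification to \cite{RES} and present here only the grading reduction that makes the statement plausible and shows how it will be applied.
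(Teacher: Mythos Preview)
The paper does not prove Theorem~\ref{4.4}; it is quoted from Remeslennikov and St\"ohr \cite{RES} and used as a black box. So there is nothing to compare your argument against, and your closing remark --- that you would ``ultimately defer the full combinatorial verification to \cite{RES}'' --- is in fact exactly what the paper does.

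That said, the sketch you give before deferring contains a genuine error. Your key structural claim, that every homogeneous solution of $[x_1,u]+[x_2,v]=0$ in $F$ has the form $u=[x_2,w]$, $v=-[x_1,w]$, is false: for such a pair one computes via the Jacobi identity
\[
[x_1,[x_2,w]]-[x_2,[x_1,w]] = [[x_1,x_2],w],
\]
which is nonzero for generic $w$. You have conflated the Lie bracket with the associative product. In the free associative algebra $K\langle X\rangle$ the kernel of $(a,b)\mapsto x_1a+x_2b$ is indeed $\{(x_2c,-x_1c):c\in K\langle X\rangle\}$, by looking at first letters of words; but $(u,v)\mapsto [x_1,u]+[x_2,v]$ is a different linear map, and the Dynkin--Specht--Wever projection does not intertwine the two in the way your descent step requires. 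Consequently your induction on ``foreign letters'' breaks down at the point where you extract $w$ and subtract. The multidegree reduction you describe is correct and standard, but after it one is left with exactly the assertion that a multihomogeneous solution involving some $x_j$ with $j\ge 3$ must vanish --- and that \emph{is} the content of the theorem, not a preliminary reduction of it. The proof in \cite{RES} proceeds via elimination theory for free Lie algebras and is not a short combinatorial lemma.
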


This result has the following corollary for $g\ge 3$:

\begin{cor}\label{4.5}
Let $F_g$ be the free Lie algebra with $g\ge 3$ generators $x_1,\ldots ,x_g$. Consider the linear system
of equations
\[
[u_{ij},x_k]+[x_j,u_{ik}]=0
\]
for all $1\le i,j,k\le g$ in the variables $u_{ij}$ with $u_{ij}=u_{ji}$ for all $i,j$.
Then all solutions $u_{ij}$ in the commutator $[F_g,F_g]$ are contained in the center $Z(F_g)=0$.
\end{cor}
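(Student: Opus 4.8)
The plan is to deduce Corollary \ref{4.5} from Theorem \ref{4.4} by isolating, for each pair of generators, a single equation of the form to which that theorem applies, and then pushing the resulting ``inner solution'' constraint to a contradiction using the grading of the free Lie algebra. First I would fix a solution $(u_{ij})_{1\le i,j\le g}$ with all $u_{ij}\in [F_g,F_g]$ and $u_{ij}=u_{ji}$. For a fixed index $i$, and any two distinct indices $j,k$ different from $i$, the relation $[u_{ij},x_k]+[x_j,u_{ik}]=0$ can be rewritten (using antisymmetry of the bracket) as $[x_k,u_{ij}]+[x_j,u_{ik}]=0$, i.e. $[x_j,u_{ik}]+[x_k,u_{ij}]=0$. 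Since $x_j,x_k$ are two of the free generators, Theorem \ref{4.4} (applied after renaming $x_1:=x_j$, $x_2:=x_k$) tells us that $u_{ik}$ and $u_{ij}$ both lie in the free Lie subalgebra $\langle x_j,x_k\rangle$ generated by $x_j$ and $x_k$.

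The key step is then to exploit that this holds simultaneously for \emph{all} admissible choices of the second index. Since $g\ge 3$, given a fixed $i$ and a fixed $k\ne i$, we may vary the third index $j$ over the (at least two) remaining values in $\{1,\ldots,g\}\setminus\{i,k\}$ together with $i$ itself — more precisely, for each such $j$ we learn $u_{ik}\in\langle x_j,x_k\rangle$. Hence $u_{ik}$ lies in the intersection $\bigcap_{j\ne k}\langle x_j,x_k\rangle$ of the free subalgebras on the pairs $\{x_j,x_k\}$ as $j$ ranges over all indices different from $k$. Because the free Lie algebra $F_g$ is graded by multidegree in the generators $x_1,\ldots,x_g$, a homogeneous component of $\langle x_j,x_k\rangle$ involves only the letters $x_j$ and $x_k$; an element common to $\langle x_j,x_k\rangle$ and $\langle x_{j'},x_k\rangle$ for $j\ne j'$ can therefore only involve the single letter $x_k$, and such elements of $[F_g,F_g]$ are zero (the only multilinear brackets in one letter vanish, as $[x_k,x_k]=0$). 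This forces $u_{ik}=0$ for all $i\ne k$.

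It remains to handle the ``diagonal'' variables $u_{ii}$ and to confirm the assertion $Z(F_g)=0$. For $u_{ii}$, take the equation with $j=k=i$? — that is degenerate, so instead use the equation $[u_{ij},x_k]+[x_j,u_{ik}]=0$ with $k=i$ and $j\ne i$: it reads $[u_{ij},x_i]+[x_j,u_{ii}]=0$, i.e. $[x_j,u_{ii}]+[x_i,u_{ij}]=0$, so by Theorem \ref{4.4} $u_{ii}\in\langle x_i,x_j\rangle$; letting $j$ range over all indices $\ne i$ (there are at least two, as $g\ge 3$) and arguing with the grading exactly as before gives $u_{ii}=0$ as well. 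Finally, $Z(F_g)=0$ because a free Lie algebra of rank $\ge 2$ is centerless: any central element would, in each multidegree, be a homogeneous Lie polynomial commuting with every generator, which by the grading and the freeness (e.g. via the embedding into the free associative algebra, where the centralizer of two algebraically independent elements is trivial) is impossible. Thus every $u_{ij}$ vanishes, which is the claim.

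The main obstacle I anticipate is making the intersection argument $\bigcap_{j}\langle x_j,x_k\rangle=0$ rigorous without hand-waving: one needs to phrase it in terms of the $\Z^g$-grading of $F_g$ by degree in each generator and observe that $\langle x_j,x_k\rangle$ is exactly the span of the graded pieces supported on $\{j,k\}$, so that a nonzero element of two such subalgebras for distinct $j$ is supported on $\{x_k\}$ alone and hence lies in $K x_k\cap[F_g,F_g]=0$. Everything else is a routine bookkeeping of indices, relying on $g\ge 3$ to guarantee enough ``spare'' generators to run the grading argument.
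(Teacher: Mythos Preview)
Your proposal is correct and follows essentially the same route as the paper: apply Theorem~\ref{4.4} to a couple of the equations so that each $u_{ij}$ lands in two distinct rank-$2$ free subalgebras $\langle x_a,x_k\rangle$ and $\langle x_b,x_k\rangle$, then use the $\Z^g$-multidegree grading to see their intersection is $K x_k$, which meets $[F_g,F_g]$ trivially. Your write-up is in fact a bit more careful than the paper's, since you treat the diagonal variables $u_{ii}$ separately (the paper's single choice of $k$ does not literally give two different subalgebras when $i=j$) and you spell out the grading argument for the intersection; the only slip is the parenthetical ``at least two'' for $\{1,\ldots,g\}\setminus\{i,k\}$ when $g=3$, but you immediately repair it by allowing $j=i$.
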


\begin{proof}
Fix $i,j$ in $\{1,2,\ldots,g\}$ with $u_{ij}\in [F_g,F_g]$. We will show that $u_{ij}=0$.
Since $g\ge 3$ we may select a $k\in \{1,2,\ldots,g\}$, which is distinct from $i$ and $j$.
Then we have
\begin{align*}
[u_{ik},x_j] + [x_k,u_{ij}] & =0\\
[u_{ji},x_k] + [x_i,u_{jk}] & =0.
\end{align*}
By Theorem $\ref{4.4}$ it follows that $u_{ij}=u_{ji}$ is contained in the Lie algebra
\[
\langle x_i,x_k\rangle \cap \langle x_j,x_k \rangle = K\cdot x_k.
\]
Since we assume that $u_{ij}\in [F_g,F_g]$ we have
\[
u_{ij}\in K\cdot x_k \cap [F_g,F_g]=0.
\]
\end{proof}

This gives an analogous result for free-nilpotent Lie algebras.

\begin{cor}\label{4.6}
Let $F_{g,c}$ be the free Lie algebra with $g\ge 3$ generators $x_1,\ldots ,x_g$ of nilpoteny
class $c\ge 2$. Consider the linear system
of equations
\[
[u_{ij},x_k]+[x_j,u_{ik}]=0
\]
for all $1\le i,j,k\le g$ in the variables $u_{ij}$ with $u_{ij}=u_{ji}$ for all $i,j$.
Then all solutions $u_{ij}$ in the commutator $[F_{g,c},F_{g,c}]$ are contained in the center $Z(F_{g,c})$.
\end{cor}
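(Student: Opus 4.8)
The plan is to deduce this from Corollary \ref{4.5} by passing to the quotient $F_{g,c} = F_g/F_g^{c+1}$. First I would fix a solution family $(u_{ij})$ in $[F_{g,c},F_{g,c}]$ of the given system, and lift each $u_{ij}$ to an element $\tilde{u}_{ij}$ of $[F_g,F_g]$, chosen symmetrically so that $\tilde{u}_{ij}=\tilde{u}_{ji}$. The equations $[u_{ij},x_k]+[x_j,u_{ik}]=0$ then say that the corresponding elements $[\tilde u_{ij},x_k]+[x_j,\tilde u_{ik}]$ of $F_g$ lie in $F_g^{c+1}$ rather than being zero, so I cannot apply Corollary \ref{4.5} directly to the $\tilde u_{ij}$.

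To repair this I would argue degree by degree. Write $F_g$ as the direct sum of its homogeneous components $F_g = \bigoplus_{m\ge 1} F_g^{[m]}$, and decompose each lift $\tilde u_{ij} = \sum_{m\ge 2} \tilde u_{ij}^{[m]}$ into homogeneous pieces (there is no degree-$1$ part since $u_{ij}\in[F_{g,c},F_{g,c}]$ lifts into $[F_g,F_g]$). Since the bracket is homogeneous, for each fixed degree $m$ the truncated family $(\tilde u_{ij}^{[m]})$, extended by zero in higher degrees, satisfies $[\tilde u_{ij}^{[m]},x_k]+[x_j,\tilde u_{ik}^{[m]}]=0$ exactly in $F_g$ whenever $m+1 \le c$, because the relation in $F_{g,c}$ only discards terms of degree $\ge c+1$. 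Hence by Corollary \ref{4.5} we get $\tilde u_{ij}^{[m]} = 0$ for all $m$ with $m \le c-1$. Projecting back to $F_{g,c}$, this shows $u_{ij}$ has no homogeneous component in degrees $2,\ldots,c-1$, so $u_{ij}$ lies in the degree-$c$ component, which is exactly $Z(F_{g,c})$.

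The main obstacle is the bookkeeping in the degree-by-degree reduction: one must check that for $m\le c-1$ the equations in $F_{g,c}$ really do force the homogeneous identity in $F_g$ with no leftover from the truncation $F_g^{c+1}$, and that Corollary \ref{4.5} applies to the homogeneous (hence $[F_g,F_g]$-valued) families even though they are not the original lifts. A cleaner alternative, which I would mention, is to bypass lifting entirely: run the proof of Corollary \ref{4.5} verbatim inside $F_{g,c}$, using that $\langle x_i,x_k\rangle \cap \langle x_j,x_k\rangle$ in $F_{g,c}$ is still $K\cdot x_k$ (the free subalgebra on $\{x_i,x_k\}$ inside $F_{g,c}$ is the image of the free Lie algebra on two generators, truncated at class $c$, and two such subalgebras sharing only the generator $x_k$ intersect in $K\cdot x_k$), together with Theorem \ref{4.4} applied after lifting the single pair of equations for the chosen indices $i,k$ and $j,k$; then $u_{ij} \in K\cdot x_k \cap [F_{g,c},F_{g,c}] = 0$, and in particular $u_{ij}\in Z(F_{g,c})$. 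Either way the conclusion $u_{ij}\in Z(F_{g,c})$ follows, and I would present whichever of the two is shorter once the details are filled in.
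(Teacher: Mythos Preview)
Your main approach is essentially the same as the paper's: decompose into homogeneous components and observe that for degrees $m\le c-1$ the equations hold on the nose in $F_g$ (since the bracket raises degree by one and nothing of degree $\le c$ is killed by the quotient), then apply Corollary~\ref{4.5} to kill all components below the top, leaving $u_{ij}\in F_{g,c}^c=Z(F_{g,c})$. The paper phrases it as decomposing inside $F_{g,c}$ first and then lifting each homogeneous piece, while you lift first and then decompose in $F_g$; these are the same argument.

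Your alternative sketch, however, has a gap. If you lift the two equations $[u_{ik},x_j]+[x_k,u_{ij}]=0$ and $[u_{ji},x_k]+[x_i,u_{jk}]=0$ to $F_g$, they only hold modulo $F_g^{c+1}$, and Theorem~\ref{4.4} as stated says nothing about approximate solutions of $[x,u]+[y,v]\in F_g^{c+1}$. So you cannot conclude directly that the lift of $u_{ij}$ lies in $\langle x_j,x_k\rangle$ (or its image in $F_{g,c}$). To make that route work you would still need exactly the degree-by-degree reduction you carried out in your main argument, so the ``cleaner alternative'' does not actually bypass it. Stick with the first argument.
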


\begin{proof}
Let $(u_{ij})$ be a solution in $[F_{g,c},F_{g,c}]$, and decompose each $u_{ij}$ into
its homogeneous components:
\[
u_{ij}=\sum_{2\le k\le c} u_{ij}^k.
\]
Then for each $2\le k\le c$ we obtain a new solution $(u_{ij}^k)$ of degree $k$ in $F_{g,c}$. 
For $k\le c-1$ the obvious lift of $(u_{ij}^k)$ is also a solution in the free Lie algebra $F_g$, because
the vanishing condition is of degree $k+1\le c$. By Corollary $\ref{4.5}$ we obtain
\[
u_{ij}^2=\cdots =u_{ij}^{c-1}=0.
\]
This means that all of the $u_{ij}$ are contained in the center of $F_{g,c}$. 
\end{proof}

\begin{ex}\label{4.7}
For $g=2$, generators $x_1,x_2$ and variables $u_{11},u_{12},u_{22}$ the 
linear system of equations, without repetitions, is given by
\begin{align*}
[u_{11},x_2]+[x_1,u_{12}] & = 0,\\
[u_{22},x_1]+[x_2,u_{12}] & = 0. \\
\end{align*}
For $g=3$, generators $x_1,x_2,x_3$ and variables $u_{11},u_{12},u_{13},u_{22},u_{23},u_{33}$ the 
linear system of equations, without repetitions, is given by
\begin{align*}
[u_{11},x_2]+[x_1,u_{12}] & = [u_{11},x_3]+[x_1,u_{13}] = 0,\\
[u_{12},x_3]+[x_2,u_{13}] & = [u_{22},x_1]+[x_2,u_{12}] = 0,\\
[u_{22},x_3]+[x_2,u_{23}] & = [u_{23},x_1]+[x_3,u_{12}] = 0,\\
[u_{33},x_1]+[x_3,u_{13}] & = [u_{33},x_2]+[x_3,u_{12}] = 0.\\
\end{align*}
\end{ex}

{\em Proof of Theorem $\ref{4.3}$}: We want to show by induction on $c\ge 3$ that
all CPA-structures on $\Lf:=F_{3,c}$ satisfy $\Lf\cdot [\Lf,\Lf]=0$ and 
$\Lf\cdot \Lf\subseteq Z(\Lf)$. By Proposition $\ref{4.2}$ we know that the result is
true for $c=3$. Let us consider $\Ln:=F_{3,c+1}$ and $\Lq:=\Ln/\Ln^{c+1}$. We have $\Lq\cong F_{3,c}$. By induction
hypothesis we have

\begin{align*}
\Lq\cdot \Lq & \subseteq \Lq^c=\Ln^c/\Ln^{c+1},\\
\Lq\cdot [\Lq,\Lq] & \subseteq \Lq^{c+1}.
\end{align*} 

This implies that

\begin{align*}
\Ln\cdot \Ln & \subseteq \Ln^c,\\
\Ln\cdot [\Ln,\Ln] & \subseteq \Ln^{c+1}.
\end{align*} 

Using \eqref{com5} we obtain

\begin{align*}
\Ln\cdot \Ln^3 & \subseteq [[\Ln\cdot \Ln,\Ln],\Ln]+[\Ln,[\Ln\cdot \Ln,\Ln]]+[\Ln,[\Ln,\Ln\cdot \Ln]]\\
               & \subseteq [[\Ln\cdot \Ln, \Ln],\Ln]\\
               & \subseteq [[\Ln^c, \Ln],\Ln] \\
               & \subseteq \Ln^{c+2}=0. 
\end{align*} 

Now \eqref{com4}, \eqref{com6} and $c\ge 3$ imply that

\begin{align*}
\Ln\cdot [\Ln,\Ln] & =[\Ln,\Ln]\cdot \Ln \\
               & \subseteq \Ln\cdot(\Ln\cdot \Ln)-\Ln\cdot (\Ln\cdot \Ln)\\
               & \subseteq \Ln\cdot(\Ln\cdot \Ln) \\
               & \subseteq \Ln\cdot \Ln^c \\
               & \subseteq \Ln\cdot \Ln^3=0. 
\end{align*} 

For all $i,j$ in $\{1,2,3\}$ we define
\[
u_{ij}:=x_i\cdot x_j \in \Ln\cdot \Ln \subseteq \Ln^c\subseteq [\Ln,\Ln].
\]
By $\Ln\cdot [\Ln,\Ln]=0$, \eqref{com4} and \eqref{com6} we obtain the equations
\begin{align*}
u_{ij} & = u_{ji},\\
[u_{ij},x_k]+[x_j,u_{ik}] & = x_i\cdot [x_j,x_k] = 0.
\end{align*}

By Corollary $\ref{4.6}$ we obtain that $u_{ij}\in \Ln^{c+1}$ for all $i,j$, so that
\begin{align*}
\Ln\cdot \Ln & \subseteq x_1\cdot \Ln+x_2\cdot \Ln +x_3\cdot \Ln+[\Ln,\Ln]\cdot \Ln \\
             & \subseteq \Ln^{c+1}.
\end{align*}

Hence the induction step is finished. \qed 
\vspace*{0.5cm}

Of course one would like to generalize Theorem $\ref{4.3}$ to all free-nilpotent Lie algebras $F_{g,c}$ 
with $g\ge 3$. The induction step works as above, but we do not know the base case for given $g\ge 4$. 

\begin{cor}
Fix an integer $g\ge 3$. Suppose that all CPA-structures on $F_{g,3}$ are central.
Then all CPA-structures $F_{g,c}$ are central for all $c\ge 3$.
\end{cor}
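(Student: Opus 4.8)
The plan is to re-run the induction on the nilpotency class $c$ from the proof of Theorem $\ref{4.3}$ almost verbatim, the only changes being that the index set $\{1,2,3\}$ is replaced by $\{1,\ldots,g\}$ and that the base case $c=3$, previously furnished by Proposition $\ref{4.2}$, is now furnished by the hypothesis. First I would make the base case precise: the assumption gives that every CPA-structure on $F_{g,3}$ satisfies $F_{g,3}\cdot F_{g,3}\subseteq Z(F_{g,3})$, and since $F_{g,3}$ is a stem Lie algebra the Lemma preceding Example $\ref{3.11}$ then also yields $F_{g,3}\cdot[F_{g,3},F_{g,3}]=0$. So, writing $\Lf:=F_{g,c}$, the statement to be propagated by induction is the pair of inclusions $\Lf\cdot\Lf\subseteq\Lf^c=Z(\Lf)$ and $\Lf\cdot[\Lf,\Lf]=0$, which holds for $c=3$.

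For the induction step I would fix $c\ge 3$, set $\Ln:=F_{g,c+1}$ and $\Lq:=\Ln/\Ln^{c+1}\cong F_{g,c}$, and push a given CPA-structure on $\Ln$ down to $\Lq$; the induction hypothesis gives $\Lq\cdot\Lq\subseteq\Lq^c$ and $\Lq\cdot[\Lq,\Lq]=0$, which lift to $\Ln\cdot\Ln\subseteq\Ln^c$ and $\Ln\cdot[\Ln,\Ln]\subseteq\Ln^{c+1}$. Then, exactly as for $g=3$: \eqref{com5} gives $\Ln\cdot\Ln^3\subseteq[[\Ln\cdot\Ln,\Ln],\Ln]\subseteq[[\Ln^c,\Ln],\Ln]\subseteq\Ln^{c+2}=0$; next \eqref{com4}, \eqref{com6} together with $c\ge 3$ give $\Ln\cdot[\Ln,\Ln]=[\Ln,\Ln]\cdot\Ln\subseteq\Ln\cdot(\Ln\cdot\Ln)\subseteq\Ln\cdot\Ln^c\subseteq\Ln\cdot\Ln^3=0$. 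Putting $u_{ij}:=x_i\cdot x_j\in\Ln\cdot\Ln\subseteq\Ln^c\subseteq[\Ln,\Ln]$ for $1\le i,j\le g$, and invoking $\Ln\cdot[\Ln,\Ln]=0$ with \eqref{com4} and \eqref{com6}, one gets $u_{ij}=u_{ji}$ and $[u_{ij},x_k]+[x_j,u_{ik}]=x_i\cdot[x_j,x_k]=0$ for all $i,j,k\in\{1,\ldots,g\}$. Now Corollary $\ref{4.6}$, which is already stated for every $g\ge 3$, forces $u_{ij}\in\Ln^{c+1}=Z(\Ln)$; since $\Ln$ is generated as a Lie algebra by $x_1,\ldots,x_g$ we conclude $\Ln\cdot\Ln\subseteq x_1\cdot\Ln+\cdots+x_g\cdot\Ln+[\Ln,\Ln]\cdot\Ln\subseteq\Ln^{c+1}=Z(\Ln)$, which closes the induction and also recovers $\Ln\cdot[\Ln,\Ln]=0$.

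I do not expect a real obstacle here: in the proof of Theorem $\ref{4.3}$ the restriction $g=3$ is used only to invoke Proposition $\ref{4.2}$ as the base case, whereas the one substantive ingredient of the induction step, namely Corollary $\ref{4.6}$ (ultimately the Remeslennikov--St\"ohr result, Theorem $\ref{4.4}$), is already available for all $g\ge 3$. The only points deserving a careful check are routine bookkeeping: that pushing the CPA-structure to the quotient and lifting the inclusions back is legitimate (it is, since $\Ln^{c+1}$ is a characteristic ideal), and that in the final chain $\Ln\cdot\Ln$ is genuinely spanned by the $u_{ij}$ modulo $[\Ln,\Ln]\cdot\Ln$, with the latter term vanishing --- both immediate from commutativity of the product and from $\Ln$ being generated by its $g$ free generators.
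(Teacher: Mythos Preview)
Your proposal is correct and matches the paper's approach exactly: the paper itself remarks that ``the induction step works as above, but we do not know the base case for given $g\ge 4$,'' and you have simply spelled out that induction step with $\{1,2,3\}$ replaced by $\{1,\ldots,g\}$, using the hypothesis in place of Proposition~\ref{4.2} and invoking Corollary~\ref{4.6} (already stated for all $g\ge 3$) at the key point.
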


For $g=2$ we have established the base case, but an argument is missing to obtain a corresponding result 
to Corollary $\ref{4.5}$. However, from direct computations it is reasonable to expect that all solutions of the 
equations are central, and hence that all CPA-structures on $F_{2,c}$ are central for $c\ge 3$. 
Thus we want to pose the following conjecture.

\begin{con}\label{4.9}
All CPA-structures on $F_{g,c}$ with $c\ge 3$ and $g\ge 2$ are central.
\end{con}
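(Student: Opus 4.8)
The plan is to split the conjecture into the cases $g\ge 3$ and $g=2$, which require genuinely different inputs. For $g\ge 3$ the strategy is already in place: by the corollary following Theorem~\ref{4.3}, it suffices to prove that every CPA-structure on the base case $F_{g,3}$ is central, since the induction on $c$ then runs verbatim as in the proof of Theorem~\ref{4.3} (Corollary~\ref{4.6} is available for every $g\ge 3$). For $g=3$ this base case is Proposition~\ref{4.2}. For $g\ge 4$ one would carry out the same computation: write the operators $L(x_1),\dots ,L(x_g)$ with indeterminate entries, impose that they be nilpotent (Theorem~\ref{3.2}) and derivations of $F_{g,3}$, solve the linear equations coming from \eqref{com4} and \eqref{com6}, and finish the quadratic equations from \eqref{com5} with a Gr\"obner basis. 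This is a finite computation for each fixed $g$, so the only real obstacle here is to replace it by a single argument valid for all $g$ simultaneously. A tempting shortcut is to transport a CPA-structure on $F_{g,3}$ along the projection $F_{g,3}\to F_{3,3}$ killing all but three generators, but axiom \eqref{com5} does not transport along such a map---the product of two lifted generators need not be central, cf.\ Example~\ref{3.12}---so this route needs a new idea.

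For $g=2$ the missing ingredient is a two-generator analogue of Corollary~\ref{4.5}: that for every generating pair $(x,y)$ of $F_{2,c}$ the only solutions $u,v,w\in [F_{2,c},F_{2,c}]$ of
\begin{align*}
[x,u]+[y,v] &= 0,\\
[x,v]+[y,w] &= 0
\end{align*}
are central. Granting this, the induction step of the proof of Theorem~\ref{4.3} goes through for $g=2$ with this statement in place of Corollary~\ref{4.6}, and together with the base case $F_{2,3}$ (Example~\ref{3.11}) it yields Conjecture~\ref{4.9} for $g=2$. To prove the statement I would first reduce to the standard generators: a generating pair differs from $(x_1,x_2)$ by an automorphism of $F_{2,c}$, and automorphisms preserve both $[F_{2,c},F_{2,c}]$ and $Z(F_{2,c})$, so it is enough to treat $(x,y)=(x_1,x_2)$. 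Then, exactly as in the proof of Corollary~\ref{4.6}, decompose a solution into homogeneous components: the component of top degree $c$ is automatically central, and for each degree $k\le c-1$ the components lift to a homogeneous solution of the same two equations in the free Lie algebra $F$ on $\{x,y\}$. Everything thus reduces to the claim that in $F$ the only homogeneous solution $u,v,w$ of degree $\ge 2$ of $[x,u]+[y,v]=0$ and $[x,v]+[y,w]=0$ is $u=v=w=0$.

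This free-Lie-algebra claim is where the real difficulty lies, and where Theorem~\ref{4.4} by itself is not enough: for two generators the free subalgebra generated by $x$ and $y$ is all of $F$, so the conclusion of Theorem~\ref{4.4} is vacuous, and one genuinely needs the second equation together with the finer structure of the solution set of a single equation $[x,u]+[y,v]=0$ that is implicit in \cite{RES}---from which one would intersect the two solution sets degree by degree and check that the intersection is trivial. An alternative, self-contained route is to work in the free associative algebra $T=T(x,y)\supseteq F$, where the equations become $xu+yv=ux+vy$ and $xv+yw=vx+wy$; applying the maps ``erase a leading $x$'' and ``erase a leading $y$'' to these identities gives, for homogeneous $u,v,w$ of degree $k\ge 2$,
\begin{align*}
u &= \pi_x(u)x+\pi_x(v)y,\\
v &= \pi_y(u)x+\pi_y(v)y = \pi_x(v)x+\pi_x(w)y,\\
w &= \pi_y(v)x+\pi_y(w)y,
\end{align*}
and combining these descent relations with the fact that $u,v,w$ are Lie elements (for instance via the Dynkin--Specht--Wever idempotent) should force $u=v=w=0$ by induction on $k$. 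I expect this last step---or, equivalently, extracting a full parametrization of the solutions of $[x,u]+[y,v]=0$ from \cite{RES}---to be the heart of the argument; by contrast the $g\ge 4$ base cases $F_{g,3}$ are, in principle, ``only'' larger instances of the computation in Proposition~\ref{4.2}.
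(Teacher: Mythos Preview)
The statement you are attempting to prove is labelled a \emph{Conjecture} in the paper, and the paper offers no proof of it. What the paper does is exactly what you outline: it reduces the case $g\ge 3$ to the base case $F_{g,3}$ via the induction argument of Theorem~\ref{4.3} together with Corollary~\ref{4.6}, establishes that base case only for $g=3$ (Proposition~\ref{4.2}), and for $g=2$ identifies the missing analogue of Corollary~\ref{4.5}---which it reformulates in Section~5 as ``property~$F$'' for $F_{2,c}$---and verifies it only for $3\le c\le 10$ by direct computation. So your plan faithfully mirrors the paper's own analysis of what remains, but it does not go beyond it.

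In particular, your proposal carries the same two genuine gaps the paper leaves open. First, for $g\ge 4$ you describe the base-case computation on $F_{g,3}$ but do not carry it out or give a uniform argument, and you correctly observe that the naive reduction via $F_{g,3}\to F_{3,3}$ fails. Second, for $g=2$ your entire argument hinges on the free-Lie-algebra claim that the only homogeneous solution of degree $\ge 2$ to the pair $[x,u]+[y,v]=0$, $[x,v]+[y,w]=0$ is trivial. You sketch a descent in $T(x,y)$ using ``erase a leading letter'' maps and the Dynkin--Specht--Wever idempotent, but the decisive step (``should force $u=v=w=0$'') is asserted, not proved; the paper likewise isolates this claim as the open problem and does not resolve it. Until those two ingredients are actually supplied, what you have is an accurate roadmap to the conjecture rather than a proof.
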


For $g=2$, we have confirmed the conjecture for all $3\le c\le 10$ by direct computation.
The dimensions of  $F_{2,c}$ for $1\le c\le 10$ are $2,3,5,8,14,23,41,71,127,226$.

\section{Property F}

Motivated by CPA-structures on $F_{2,c}$ we want to find out, which $2$-generated nilpotent
Lie algebras have the property that all solutions in the commutator are central, 
for the system of equations of Example $\ref{4.7}$. 

\begin{defi}
Let $\Lg$ be a nilpotent Lie algebra with $\dim \Lg/[\Lg,\Lg]=2$. We say that $\Lg$ has
{\em property $F$}, if for every generating pair $(x,y)$ all solutions $u,v,w\in [\Lg,\Lg]$
of the two linear equations
\begin{align*}
[x,u]+[y,v] & = 0\\
[x,v]+[y,w] & = 0
\end{align*}
are contained in the center $Z(\Lg)$.
\end{defi}

We may assume that  $\Lg$ is a stem Lie algebra, because if $Z(\Lg)$ is not contained in
$[\Lg,\Lg]$, then $\Lg$ is abelian. This gives $\dim \Lg=\dim \Lg/[\Lg,\Lg]=2$, which is not
interesting. Furthermore, all $2$-generated nilpotent Lie algebras $\Lg$ with $Z(\Lg)=[\Lg,\Lg]$ 
have property $F$. \\
In general, it is not clear whether it is enough to check this property for just one specific
generating pair $(x,y)$. However, it is enough for the free-nilpotent Lie algebra $F_{2,c}$, because
in this case the automorphism group acts transitively on the pairs of generators. \\[0.2cm]
The first observation is that Lie algebras having property $F$ cannot have a small center. 

\begin{defi}
Let $\Lg$ be a Lie algebra. Denote by
\[
z (\Lg):=\frac{\dim Z(\Lg)}{\dim \Lg}
\]
the quotient of the dimension of the center by the dimension of the Lie algebra.
\end{defi}

\begin{lem}\label{5.3}
Let $\Lg$ be a nilpotent Lie algebra with $\dim \Lg/[\Lg,\Lg]=2$ and
\[
z(\Lg)<\frac{1}{3}.
\]
Then $\Lg$ does not have property $F$.
\end{lem}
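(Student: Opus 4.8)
The plan is to prove the contrapositive in a quantitative, dimension-counting form. Suppose $\Lg$ is a $2$-generated nilpotent stem Lie algebra (we may reduce to the stem case, as noted in the excerpt) with a generating pair $(x,y)$, and consider the linear map
\[
T\colon [\Lg,\Lg]\times[\Lg,\Lg]\times[\Lg,\Lg]\ra \Lg\times\Lg,\qquad
T(u,v,w)=\bigl([x,u]+[y,v],\,[x,v]+[y,w]\bigr).
\]
The solution space of the system is exactly $\ker T$, and property $F$ asserts $\ker T\subseteq Z(\Lg)\times Z(\Lg)\times Z(\Lg)$. So if property $F$ holds then $\dim\ker T\le 3\dim Z(\Lg)$. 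On the other hand, the rank–nullity theorem gives $\dim\ker T = 3\dim[\Lg,\Lg] - \operatorname{rank} T \ge 3\dim[\Lg,\Lg] - 2\dim\Lg$. Combining,
\[
3\dim[\Lg,\Lg] - 2\dim\Lg \le 3\dim Z(\Lg).
\]
Since $\dim\Lg/[\Lg,\Lg]=2$ we have $\dim[\Lg,\Lg]=\dim\Lg-2$, so the inequality becomes $3(\dim\Lg-2)-2\dim\Lg\le 3\dim Z(\Lg)$, i.e. $\dim\Lg-6\le 3\dim Z(\Lg)$; equivalently $z(\Lg)\ge \frac{1}{3} - \frac{2}{\dim\Lg}$.

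This is close but not quite the stated bound $z(\Lg)\ge\frac13$; the discrepancy is the additive term $\frac{2}{\dim\Lg}$, which comes from the crude estimate $\operatorname{rank} T\le 2\dim\Lg$. The fix is to observe that the image of $T$ actually lands in a smaller subspace. Indeed, for any $u,v,w\in[\Lg,\Lg]$ the elements $[x,u],[y,v],[x,v],[y,w]$ all lie in $[\Lg,[\Lg,\Lg]]=\Lg^3$, so $\im T\subseteq \Lg^3\times\Lg^3$ and thus $\operatorname{rank} T\le 2\dim\Lg^3 = 2(\dim\Lg - 2 - \dim(\Lg^2/\Lg^3))$. Actually a cleaner route: note also that $(u,v,w)\mapsto(u,v,w)$ with all three in $Z(\Lg)$ already lies in $\ker T$, so one really wants a lower bound on $\dim\ker T$ modulo the central part. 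Work instead with $\bar T$ induced on $([\Lg,\Lg]/Z(\Lg))^3 \to \Lg^3\times\Lg^3$; property $F$ says $\bar T$ is injective, forcing $3(\dim[\Lg,\Lg]-\dim Z(\Lg))\le 2\dim\Lg^3$, and since $\Lg^3 = [\Lg,[\Lg,\Lg]]$ has dimension at most $\dim[\Lg,\Lg]$ one must track the two-dimensional abelianization carefully to extract exactly $z(\Lg)\ge\frac13$.

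So the key steps, in order, are: (1) reduce to the stem case and fix a generating pair; (2) package the two equations as the kernel of a single linear map $T$ and record that property $F$ means $\ker T\subseteq Z(\Lg)^3$; (3) identify the correct target space for $T$ — namely a subspace of $\Lg^3\times\Lg^3$, or better yet quotient by the center on the source — so that the rank bound is sharp enough; (4) apply rank–nullity together with $\dim[\Lg,\Lg]=\dim\Lg-2$ and simplify to obtain $\dim Z(\Lg)\ge\frac13\dim\Lg$, i.e. the negation of $z(\Lg)<\frac13$.

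I expect step (3) to be the main obstacle: the naive bound $\operatorname{rank} T\le 2\dim\Lg$ loses exactly the slack needed, so one has to argue that the image is confined to $\Lg^3$ in each coordinate and then be careful about whether $Z(\Lg)$ meets $\Lg^3$ (which it does, since $\Lg$ is a stem algebra, $Z(\Lg)\subseteq[\Lg,\Lg]$, but a priori $Z(\Lg)$ need not be contained in $\Lg^3$ for class $2$). A clean way around this subtlety is to split off the class-$2$ case by hand: if $\Lg$ has nilpotency class $2$ then $\Lg^3=0$, so $\ker T=[\Lg,\Lg]^3$ and property $F$ just says $[\Lg,\Lg]=Z(\Lg)$, giving $z(\Lg)=\frac{\dim\Lg-2}{\dim\Lg}\ge\frac13$ as soon as $\dim\Lg\ge 3$, which holds; and for class $\ge 3$ the containment $Z(\Lg)\subseteq\Lg^3$ (or at least the needed inequality) can be handled within the rank count. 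The bookkeeping of these cases is the only genuinely delicate point; everything else is linear algebra.
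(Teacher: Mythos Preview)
Your approach is correct and is the same dimension count as the paper's, just packaged differently. The paper argues via Engel's theorem: choose a basis $(e_1,\ldots,e_n)$ adapted to $Z(\Lg)\subseteq[\Lg,\Lg]\subseteq\Lg$ in which $\ad(x)$ and $\ad(y)$ are strictly lower-triangular; then for any $u\in[\Lg,\Lg]$ (zero in coordinates $1,2$) the first \emph{three} coordinates of $\ad(x)u$ vanish automatically, so the system amounts to at most $2(n-3)$ nontrivial scalar equations in the $3(r-2)$ non-central unknowns (with $r=n-\dim Z(\Lg)$), and property~$F$ forces $2(n-3)\ge 3(r-2)$, i.e.\ $\dim Z(\Lg)\ge n/3$.

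Your coordinate-free version reaches the identical inequality once you supply the one fact you stopped short of: for a $2$-generated non-abelian nilpotent Lie algebra, $\Lg^2/\Lg^3$ is exactly one-dimensional. It is spanned by the class of $[x,y]$, and nilpotency rules out $\Lg^2=\Lg^3$ unless $\Lg^2=0$. Hence $\dim\Lg^3=n-3$, and the injectivity of your induced map $\bar T\colon\bigl([\Lg,\Lg]/Z(\Lg)\bigr)^3\to\Lg^3\times\Lg^3$ gives $3(n-2-\dim Z(\Lg))\le 2(n-3)$, i.e.\ $z(\Lg)\ge\tfrac13$, on the nose. With this in hand, the case split on nilpotency class and the worry about whether $Z(\Lg)\subseteq\Lg^3$ are both unnecessary: the bound $\dim\Lg^3\le n-3$ is all that is used, and the argument goes through uniformly.
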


\begin{proof}
Assume that $\Lg$ satisfies property $F$. We want to show that this implies $z(\Lg)\ge\frac{1}{3}$.
We may assume that $\Lg$ is a stem Lie algebra, because otherwise $\Lg$ is abelian, see above.
Since $[\Lg,\Lg]$ has codimension $2$ we may assume that our vectors $u,v,w$ are of the form
$$
(0,0,y_3,\ldots ,y_r,y_{r+1},\ldots ,y_n)
$$
with central components $\{y_{r+1},\ldots ,y_n\}$, where $n=\dim \Lg$.
We have $\dim Z(\Lg)=n-r$. The two equations are now equivalent to a system of linear equations in
the $3(r-2)$ variables of the vectors $u,v,w$. By Engel's Theorem we may assume that both
$\ad (x)$ and $\ad(y)$ are strictly lower-triangular matrices. The base change is compatible
with the condition $Z(\Lg)\subseteq [\Lg,\Lg]$. 
The above equations are just
\begin{align*}
\ad(x)u & =-\ad(y)v \\
\ad(x)v & =-\ad(y)w.
\end{align*}
Both equations correspond each to $n$ linear equations, but the first three component equations
are saying $0=0$, because $\ad (x)$ and $\ad(y)$ are strictly lower-triangular.
So we have at most $2(n-3)$ non-trivial equations. Property $F$ says that they have a unique solution, hence
\[
2(n-3)\ge 3(r-2).
\]
This implies that
\[
\dim Z(\Lg)=n-r\ge n-\frac{2n}{3}=\frac{\dim \Lg}{3}.
\]
\end{proof}

\begin{cor}\label{5.4}
A filiform nilpotent Lie algebra of dimension $n\ge 4$ does not have property $F$.
\end{cor}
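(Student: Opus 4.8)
The plan is to reduce immediately to Lemma $\ref{5.3}$ by computing the invariant $z(\Lg)$ for a filiform nilpotent Lie algebra $\Lg$ of dimension $n\ge 4$. Recall that \emph{filiform} means that the lower central series satisfies $\dim \Lg^i=n-i$ for $2\le i\le n$, so that $\Lg$ has maximal nilpotency class $n-1$. The first step is to note that this forces $\dim \Lg/[\Lg,\Lg]=\dim \Lg/\Lg^2=n-(n-2)=2$, so the standing hypothesis $\dim \Lg/[\Lg,\Lg]=2$ of Lemma $\ref{5.3}$ is satisfied.

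The second step is to show that $\dim Z(\Lg)=1$. The inclusion $\Lg^{n-1}\subseteq Z(\Lg)$ is automatic and gives $\dim Z(\Lg)\ge 1$. For the reverse inequality one uses the defining feature of filiform algebras: there is an element $x\in \Lg\setminus \Lg^2$ for which $\ad(x)$ has maximal rank $n-2$ (a single Jordan block, together with $x$ itself spanning the kernel). Then $\ker \ad(x)$ is two-dimensional, spanned by $x$ and a generator of $\Lg^{n-1}$; since $\Lg$ is non-abelian we have $x\notin Z(\Lg)$, hence $Z(\Lg)\subsetneq \ker \ad(x)$ and therefore $\dim Z(\Lg)=1$. (Alternatively this standard fact about filiform Lie algebras could simply be cited.)

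Combining the two steps, $z(\Lg)=\dfrac{\dim Z(\Lg)}{\dim \Lg}=\dfrac{1}{n}\le \dfrac{1}{4}<\dfrac{1}{3}$ for every $n\ge 4$, so Lemma $\ref{5.3}$ applies and $\Lg$ does not have property $F$. I do not expect any genuine obstacle in this argument; the only point requiring a little care is the computation $\dim Z(\Lg)=1$, and even that is routine once the maximal-class structure of the lower central series is invoked.
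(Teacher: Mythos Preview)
Your argument is correct and follows essentially the same route as the paper: verify $\dim \Lg/[\Lg,\Lg]=2$, use the standard fact that a filiform Lie algebra has $\dim Z(\Lg)=1$ (the paper simply quotes $z(\Lg)=\tfrac{1}{n}$ without justification, whereas you spell out the $\ad(x)$-rank argument), and then invoke Lemma~\ref{5.3}. No gaps.
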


\begin{proof}
A filiform nilpotent Lie algebra $\Lg$ of dimesnion $n$ satisfies $\dim \Lg/[\Lg,\Lg]=2$ 
and is a stem Lie algebra. However, we have $z(\Lg)=\frac{1}{n}<\frac{1}{3}$ for $n\ge 4$.
Hence by Lemma $\ref{5.3}$, $\Lg$ does not have property $F$.
\end{proof}

\begin{rem}
For the free-nilpotent Lie algebras with $2$ generators and nilpotency class $n$ 
let us write $f(n)=z(F_{2,n})$. Then we have
\[
f(n)=\frac{I_n}{\sum_{m=1}^n I_m},
\]
where 
\[
I_m=\frac{1}{m}\sum_{d\mid m}\mu(d)2^{m/d}.
\]
The number $I_m$ also counts the monic irreducible polynomials of degree $m$ in $\F_2$.
The first values of the sequence $(f(n))$ are given by
\[
\frac{1}{2},\frac{1}{3},\frac{3}{8},\frac{6}{14},\frac{9}{23},\frac{18}{41},\frac{30}{71},
\frac{56}{127},\frac{99}{226},\cdots
\]
We have $f(n)>\frac{1}{3}$ for all $n\ge 3$, and $I_m\sim \frac{2^m}{m}$ for $m\to \infty$. 
So we obtain 
\[
\lim_{n\to \infty} f(n)=\lim_{n\to \infty}\frac{\frac{2^n}{n}}{\sum_{m=1}^n\frac{2^m}{m}}=\frac{1}{2}.
\]
\end{rem}

Using Lemma $\ref{5.3}$ we can easily show the following result for low dimensions.

\begin{prop}
Let $\Lg$ be a complex nilpotent stem Lie algebra with $\dim \Lg/[\Lg,\Lg]=2$ and dimension $n\le 7$.
Then $\Lg$ has property $F$ if and only if $\Lg$ is isomorphic to $F_{2,2}$ or $F_{2,3}$.
\end{prop}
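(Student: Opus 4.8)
The plan is to combine the dimension inequality from Lemma~$\ref{5.3}$ with an explicit enumeration of complex nilpotent stem Lie algebras of dimension $n\le 7$ with $\dim\Lg/[\Lg,\Lg]=2$. First I would dispose of the small cases $n\le 5$: here $\Lg$ is $2$-generated, so $\dim[\Lg,\Lg]\le n-2$, and the classification of low-dimensional nilpotent Lie algebras shows the only stem Lie algebras with $\dim\Lg/[\Lg,\Lg]=2$ are $F_{2,2}$ (dimension $3$), the dimension-$4$ filiform algebra (which fails property~$F$ by Corollary~$\ref{5.4}$), and in dimension $5$ the two algebras $F_{2,3}$ and the filiform one $\Lg_{5,?}$; the latter fails by Corollary~$\ref{5.4}$ as well. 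So for $n\le 5$ only $F_{2,2},F_{2,3}$ survive, and these have $Z(\Lg)=[\Lg,\Lg]$, hence have property~$F$ by the remark preceding Lemma~$\ref{5.3}$.

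Next I would handle $n=6$ and $n=7$ using Lemma~$\ref{5.3}$ as a sieve. The condition $z(\Lg)\ge\frac13$ forces $\dim Z(\Lg)\ge 2$ when $n=6$ and $\dim Z(\Lg)\ge 3$ when $n=7$. Going through the classification of $6$- and $7$-dimensional complex nilpotent Lie algebras (de~Graaf's list, or Magnin's) and restricting to those that are $2$-generated stem algebras, one lists the finitely many candidates satisfying this center bound. For each surviving candidate I would then have to decide property~$F$ directly: fix a convenient generating pair $(x,y)$, write $\ad(x),\ad(y)$ in strictly lower-triangular form adapted to a basis through $[\Lg,\Lg]$ and $Z(\Lg)$, set up the linear system $\ad(x)u=-\ad(y)v$, $\ad(x)v=-\ad(y)w$ in the $3(r-2)$ unknowns (with $r=\dim[\Lg,\Lg]+2$... careful: $r$ as in the lemma is $n-\dim Z(\Lg)$), and compute the solution space. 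Since the automorphism group of $F_{2,c}$ acts transitively on generating pairs, one pair suffices there; for the other candidates one must in principle check all generating pairs, but in practice a single generic pair already produces a non-central solution, which is enough to rule the algebra out.

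The expected outcome is that every $6$- or $7$-dimensional candidate other than... wait, note that $F_{2,c}$ only reaches dimensions $2,3,5,8,\dots$, so there is \emph{no} free-nilpotent $F_{2,c}$ in dimension $6$ or $7$; thus the claim is really that \emph{no} stem Lie algebra of dimension $6$ or $7$ with $\dim\Lg/[\Lg,\Lg]=2$ has property~$F$. So the task for $n\in\{6,7\}$ reduces to: for each candidate passing the center-size sieve, exhibit a non-central solution $(u,v,w)$. I would organize this as a short table, one line per algebra, giving $\ad(x),\ad(y)$ and an explicit non-central solution.

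The main obstacle is the bookkeeping in dimensions $6$ and $7$: one must correctly extract from the literature exactly which $2$-generated stem nilpotent Lie algebras have center of dimension at least $n/3$, and then for each produce an explicit non-central solution of the two linear equations (being careful that a solution is ``non-central'' means it has a nonzero component outside $Z(\Lg)$, i.e.\ the solution space strictly exceeds the central part $Z(\Lg)^{3}$). The dimension count in Lemma~$\ref{5.3}$ is only a \emph{necessary} condition, so passing the sieve does not by itself settle anything --- each candidate genuinely needs the explicit computation, and this is where care (ideally a short Gr\"obner-basis or linear-algebra check, as in Proposition~$\ref{4.2}$) is required.
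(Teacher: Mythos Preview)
Your overall strategy is exactly the one the paper uses: sieve with Lemma~$\ref{5.3}$, then run through the low-dimensional classification of $2$-generated stem nilpotent Lie algebras. The paper's execution is even shorter than you anticipate: in dimension $6$ Magnin's list gives a \emph{single} candidate $\Lg_{6,14}$ (with brackets $[x_1,x_i]=x_{i+1}$ for $2\le i\le 4$ and $[x_2,x_3]=x_6$), which one checks directly does not have property~$F$; and in dimension $7$ the sieve $\dim Z(\Lg)\ge 3$ already leaves \emph{no} candidates at all, so no further computation is needed.

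There is one genuine slip in your $n\le 5$ paragraph. You justify property~$F$ for $F_{2,3}$ by asserting $Z(F_{2,3})=[F_{2,3},F_{2,3}]$, but this is false: with $[e_1,e_2]=e_3$, $[e_1,e_3]=e_4$, $[e_2,e_3]=e_5$ one has $[\Lg,\Lg]=\langle e_3,e_4,e_5\rangle$ while $Z(\Lg)=\langle e_4,e_5\rangle$. So the ``center equals commutator'' shortcut applies only to $F_{2,2}$. For $F_{2,3}$ you actually have to check the two equations directly; the paper does this, noting that because $\Aut(F_{2,c})$ acts transitively on generating pairs, a single pair $(e_1,e_2)$ suffices.
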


\begin{proof}
In dimension $3$ there is only one such Lie algebra, namely $\Lg=F_{2,2}$. It has property $F$,
since the center coincides with the commutator subalgebra. In dimension $4$ we only have the filiform nilpotent
Lie algebra $\Lg=\Ln_4$, which does not have property $F$, see Lemma $\ref{5.4}$.
In dimension $5$, the only stem nilpotent Lie algebra with $\dim \Lg/[\Lg,\Lg]=2$ and $\dim Z(\Lg)\ge 2$ is
$F_{2,3}$. It is easy to verify that $F_{2,3}$ has property $F$, since it is enough to check it just for
one specific pair of generators. \\
In dimension $6$, the classification shows that the only stem nilpotent Lie algebra satisfying
$\dim \Lg/[\Lg,\Lg]=2$ and $\dim Z(\Lg)\ge 2$ is given by $\Lg_{6,14}$, in Magnin's notation \cite{MAG}.
The Lie brackets are given by
\[
[x_1,x_i]=x_{i+1}, 2\le i\le 4,[x_2,x_3]=x_6.
\]
Here $x_1,x_2$ generate $\Lg$. A direct computation shows that is does not have property $F$. \\
In dimension $7$, Magnin's list \cite{MAG} shows that there is no nilpotent stem Lie algebra 
with $\dim \Lg/[\Lg,\Lg]=2$ and $\dim Z(\Lg)\ge 3$.
\end{proof}

In general, it seems likely that the following holds true:

\begin{prob*}
A nilpotent Lie algebra $\Lg$ with $\dim \Lg/[\Lg,\Lg]=2$ has property $F$ if and only
if it is isomorphic to some $F_{2,c}$.
\end{prob*}

\section*{Acknowledgements}
Dietrich Burde is supported by the Austrian Science Foun\-da\-tion FWF, grant P28079 
and grant I3248. Wolfgang A. Moens acknowledges support by the Austrian Science Foun\-da\-tion FWF, 
grant P28079 and P30842. Karel Dekimpe is supported by  long term structural funding -- Methusalem 
grant of the Flemish Government.

\end{document}